\documentclass{amsart}
\usepackage{bookmark}
\usepackage[utf8]{inputenc}

\usepackage{amssymb,comment}
\usepackage{mathrsfs}
\usepackage{stmaryrd}

\usepackage{tikz-cd}

\usepackage{mathabx}

\usepackage{lmodern}
\usepackage{csquotes}

\usepackage[T1]{fontenc}

\definecolor{darkgreen}{rgb}{0,0.5,0}
\definecolor{darkblue}{rgb}{0,0,0.8}
\definecolor{darkred}{rgb}{0.8,0,0}
\definecolor{lightblue}{rgb}{0,0.6,0.8}

\usepackage[english]{babel}
\usepackage{url}

\usepackage{multirow}

\usepackage{enumerate}
\usepackage{colonequals}
\usetikzlibrary{matrix, calc, arrows}

\DeclareFontFamily{U}{wncy}{}
\DeclareFontShape{U}{wncy}{m}{n}{<->wncyr10}{}
\DeclareSymbolFont{mcy}{U}{wncy}{m}{n}
\DeclareMathSymbol{\Sha}{\mathord}{mcy}{"58}

\usepackage{microtype}

\theoremstyle{plain}
\newtheorem{theorem}{Theorem}[subsection]
\newtheorem{lemma}[theorem]{Lemma}
\newtheorem{corollary}[theorem]{Corollary}
\newtheorem{proposition}[theorem]{Proposition}

\newtheorem{maintheorem}{Theorem}

\theoremstyle{definition}
\newtheorem{definition}[theorem]{Definition}

\newtheorem{example}[theorem]{Example}
\newtheorem{remark}[theorem]{Remark}

\newtheorem*{assumption*}{Assumption}
\newtheorem*{claim*}{Claim}

\theoremstyle{remark}

\usepackage{cleveref}
\crefname{theorem}{Theorem}{Theorems}
\crefname{lemma}{Lemma}{Lemmata}
\crefname{corollary}{Corollary}{Corollaries}
\crefname{proposition}{Proposition}{Propositions}
\crefname{definition}{Definition}{Definitions}
\crefname{conjecture}{Conjecture}{Conjectures}
\crefname{question}{Question}{Questions}
\crefname{example}{Example}{Examples}
\crefname{algorithm}{Algorithm}{Algorithms}
\crefname{remark}{Remark}{Remarks}
\crefname{assumption}{Assumption}{Assumptions}
\crefname{maintheorem}{Theorem}{Theorems}

\def\ol#1{\overline{#1}}
\def\wt#1{\widetilde{#1}}

\def\Alphabet{A,B,C,D,E,F,G,H,I,J,K,L,M,N,O,P,Q,R,S,T,U,V,W,X,Y,Z}
\def\alphabet{a,b,c,d,e,f,g,h,i,j,k,l,m,n,o,p,q,r,s,t,u,v,w,x,y,z}
\def\endpiece{xxx}
\def\makeAlphabet[#1]{\expandafter\makeA#1,xxx,}
\def\makealphabet[#1]{\expandafter\makea#1,xxx,}
\def\makeA#1,{\def\temp{#1}\ifx\temp\endpiece\else%
	\mkbb{#1}\mkfrak{#1}\mkbf{#1}\mkcal{#1}\mkscr{#1}\mkbs{#1}\expandafter\makeA\fi}%
\def\makea#1,{\def\temp{#1}\ifx\temp\endpiece\else\mkfrak{#1}\mkbf{#1}\mkbs{#1}\expandafter\makea\fi}%
\def\mkbb#1{\expandafter\def\csname bb#1\endcsname{\mathbb{#1}}}
\def\mkfrak#1{\expandafter\def\csname fr#1\endcsname{\mathfrak{#1}}}
\def\mkbf#1{\expandafter\def\csname b#1\endcsname{\mathbf{#1}}}
\def\mkcal#1{\expandafter\def\csname c#1\endcsname{\mathcal{#1}}}
\def\mkscr#1{\expandafter\def\csname s#1\endcsname{\mathscr{#1}}}
\def\mkbs#1{\expandafter\def\csname bs#1\endcsname{{\boldsymbol{#1}}}}
\def\makeop[#1]{\xmakeop#1,xxx,}
\def\mkop#1{\expandafter\def\csname #1\endcsname{{\operatorname{#1}}}} %
\def\xmakeop#1,{\def\temp{#1}\ifx\temp\endpiece\else\mkop{#1}\expandafter\xmakeop\fi}%
\def\makeup[#1]{\xmakeup#1,xxx,}
\def\mkup#1{\expandafter\def\csname #1\endcsname{{\mathrm{#1}\,}}} %
\def\xmakeup#1,{\def\temp{#1}\ifx\temp\endpiece\else\mkup{#1}\expandafter\xmakeup\fi}%
\makeAlphabet[\Alphabet]
\makealphabet[\alphabet]
\makeop[Hom,Tor,Ext,holim,hocolim,dim,Sel,GL,PGL,SL,H,ord,Sym,Gal,Ann,ind,Aut,End,cd,Frob,Gr,sp,Tot,sm,an,Pic,NS,Br,tors,Reg,ss,new,cts,ur,alg,pd,disc,cyc,rk,cork,ab,nr,tors,Iw,res,inf,Cl,Quot,corank,ord,length,proj,Hg,im,coker,Tam,loc,cores,Char,Fitt,free,Fil,MSD,Ind,fn,min]
\makeup[Spec,Proj,Spwf,Sp,Sh,Spf,Sch,id,dR,rig,cris,ur,div,ac,BK]

\newcommand{\F}{\mathbf{F}}

\newcommand{\fm}{\mathfrak{m}}

\newcommand{\fX}{\mathfrak{X}}

\renewcommand{\epsilon}{\varepsilon}
\renewcommand{\theta}{\vartheta}
\renewcommand{\phi}{\varphi}

\newcommand{\mathup}[1]{\text{\textup{#1}}}
\renewcommand{\H} {\ensuremath{\mathup{H}}}

\newcommand{\defeq}{\colonequals}

\newcommand{\iso}{\simeq}
\newcommand{\isom}{\cong}

\newtheorem*{conjecture*}{\textbf{Conjecture}}

\numberwithin{equation}{section}

\begin{document}
\title[Greenberg's conjecture at Eisenstein primes]{On Greenberg's conjecture for rational elliptic curves at Eisenstein primes}
\author{Zichao Lin}
\address{University of Massachusetts Amherst}
\email{zichaolin@umass.edu}
\author{Mulun Yin}
\address{Morningside Center of Mathematics; Academy of Mathematics and Systems Sciences, Chinese Academy of Science}
\email{mulunyin@amss.ac.cn}
\date{\today}

\subjclass[2020]{11R23 (Primary), 11G05, 11G40, 14G10 (Secondary)}

\begin{abstract}
    Let $E/\bQ$ be an elliptic curve, and let $p$ be an odd prime of ordinary reduction for $E$, and assume that $E$ admits a rational $p$-isogeny. In this paper we prove Greenberg's conjecture on the vanishing of algebraic Iwasawa $\mu$-invariants of the Selmer groups attached to $E$ over the cyclotomic $\bZ_p$-extension of the rational numbers. Via the Iwasawa Main Conjecture for elliptic curves at Eisenstein primes, we also confirm Stevens's conjecture on the behavior of analytic $\mu$-invariants.
\end{abstract}

\maketitle
\tableofcontents

\section*{Introduction}
\addtocontents{toc}{\protect\setcounter{tocdepth}{0}}
\subsection{Statement of the main results}

Let $E/\bQ$ be an elliptic curve and let $p$ be an odd prime of ordinary reduction for $E$. We say that $p$ is an \emph{Eisenstein} prime for $E$ if $E[p]$ is reducible as a $G_\bQ$-module, where $G_\bQ=\Gal(\ol \bQ/\bQ)$ is the absolute Galois group of $\bQ$ and $E[p]$ is the $p$-torsion of $E$. Equivalently, $p$ is an Eisenstein prime for $E$ if $E$ admits a rational $p$-isogeny.

Fix a prime number $p$. Let $\bQ_p$ be the $p$-adic numbers with integers $\bZ_p$ and residue field $\F$. Let $\Gamma\defeq\Gal(\bQ_\infty/\bQ)\iso (\bZ_p,+)$ be the Galois group of the cyclotomic $\bZ_p$-extension of $\bQ$, which is the unique Galois extension of $\bQ$ whose Galois group is isomorphic to the additive group of the $p$-adic integers. More generally, for a number field $K$, denote by $K_\infty\defeq K\bQ_\infty$ its cyclotomic $\bZ_p$-extension. Let $\Lambda\defeq\bZ_p\llbracket \Gamma\rrbracket$ be the associated Iwasawa algebra, which is usually identified with the formal power series ring $\bZ_p\llbracket T\rrbracket$ when a topological generator $\gamma\in\Gamma$ is sent to $1+T$. Attached to a rational elliptic curve $E$ for which $p$ is of ordinary reduction are the following two important arithmetic objects: \begin{itemize}
    \item a Selmer group $\Sel_\Gr(E/\bQ_\infty)$ \`a la Greenberg, whose Pontryagin dual $\fX_\Gr(E/\bQ_\infty)$ is a finitely generated torsion $\Lambda$-module,
    \item a $p$-adic $L$-function $\cL_E^\MSD$ \`a la Mazur--Swinnerton-Dyer, which is an element of $\Lambda$.
\end{itemize} 

When $p$ is a prime of multiplicative reduction, $\Sel_\Gr(E/\bQ_\infty)$ still makes sense and is $\Lambda$-cotorsion, and Mazur--Tate--Teitelbaum~\cite{MTT} have constructed a $p$-adic $L$-function generalizing $\cL_E^\MSD$. By an abuse of notation, we still denote the $p$-adic $L$-function by $\cL_E^\MSD$ even in the multiplicative reduction case.

When $E$ has ordinary reduction at $p$, the famous Iwasawa Main Conjecture asserts that the $\Lambda$-characteristic ideal $\Char_\Lambda(\fX_\Gr(E/\bQ_\infty))\subset\Lambda$ associated to $\Sel_\Gr(E/\bQ_\infty)$ is generated by the single power series $\cL_E^\MSD\in\Lambda$, and is very recently completely proved when $p$ is an odd good ordinary Eisenstein prime for $E$ in a series of works~\cite{GV00},~\cite{CGS} and~\cite{KY24}. 
Thus if we let $\cL_E^\Gr$ denote a generator of $\Char_\Lambda(\fX_\Gr(E/\bQ_\infty))$ (it is sometimes called the \emph{algebraic} $p$-adic $L$-function associated to $E$), then the Iwasawa Main Conjecture asserts that\[
\cL_E^\Gr=\cL_E^\MSD\text{ in }\Lambda
\]up to a $p$-adic unit.

In particular, the Iwasawa Main Conjecture predicts that the Iwasawa $\mu$-invariants  (that is, the maximal $p$-power $p^{\mu(\cL)}$ of an element $\cL\in\Lambda$ such that $\cL$ belongs to $p^{\mu(\cL)}\Lambda$) $\mu(\cL_E^\Gr)$ and $\mu(\cL_E^\MSD)$ should agree. In fact, these $\mu$-invariants are very often believed to be $0$, that is, $\cL_E^\Gr$ and $\cL_E^\MSD$ are not divisible by any $p$-power in $\Lambda$.

The vanishing of the (algebraic) $\mu$-invariant has a very specific arithmetic interpretation. If we denote by $\bQ_n$ the $n$-th layer in the tower $\bQ=\bQ_0\subset ...\subset \bQ_n\subset ...\subset \bQ_\infty$ where each $\bQ_n$ is such that $\#\Gal(\bQ_n/\bQ)=p^n$, then\[
\mu(\cL_E^\Gr)=0 \Leftrightarrow \Sha(E/\bQ_n)[p]\text{ is bounded as }n\to\infty.
\]
Here $\Sha(E/\bQ_n)$ is the Tate-Shafarevich group attached to $E$ over $\bQ_n$, one of the most mysterious objects in arithmetic geometry and Iwasawa theory.

Greenberg~\cite[Conjecture 1.11]{Gr99} has made very precise conjectures about the vanishing of the algebraic $\mu$-invariants, and Stevens~\cite[section 4]{Stevens} has formulated a compatible conjecture on the behavior of the analytic $\mu$-invariants. We will refer to the following statement as Greenberg's conjecture.

\begin{conjecture*}[Greenberg's conjecture]
    Let $E/\bQ$ be an elliptic curve. Assume that $\fX_\Gr(E/\bQ_\infty)$ is $\Lambda$-torsion. Then there is an elliptic curve $E'$ in the $\bQ$-isogeny class of $E$ such that $\mu(\cL_{E'}^\Gr)=0$. Whenever $\cL_E^\MSD$ (hence $\cL_{E'}^\MSD$) is well-defined, $\mu(\cL_{E'}^\MSD)=0$ for the same $E'$.
\end{conjecture*}

When $p$ is an ordinary prime for $E$, it is known that $\fX_\Gr(E/\bQ_\infty)$ is $\Lambda$-torsion and $\cL_E^\MSD$ exists. Thus Greenberg's conjecture predicts that we always have $\mu(\cL_{E'}^\Gr)=\mu(\cL_{E'}^\MSD)=0$ for some $E'$ which is $\bQ$-isogenous to $E$. In particular, if $E[p]$ is irreducible over $G_\bQ$ (so that the isogeny class consists of $E$ solely), it is predicted that $\mu(\cL_E^\Gr)=\mu(\cL_E^\MSD)=0$.

The main theorem of this paper is the following.
\begin{maintheorem}\label{mainthm}
    Let $E/\bQ$ be an elliptic curve, and let $p$ be an odd good ordinary Eisenstein prime for $E$. Then Greenberg's conjecture holds. When $E$ has multiplicative reduction at $p$, the algebraic part of Greenberg's conjecture holds.
\end{maintheorem}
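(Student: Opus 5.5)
The argument splits into the algebraic part, which is the heart of the matter, and the analytic part, which is deduced from it. Since $p$ is Eisenstein, $E[p]^{ss}\simeq \F(\phi)\oplus\F(\psi)$ for characters $\phi,\psi\colon G_\bQ\to\F^\times$ with $\phi\psi=\omega$ (the mod $p$ cyclotomic character). Ordinarity forces one of them to be unramified at $p$. Moving inside the isogeny class (through the $p$-isogenies), we choose $E'$ so that its rational subgroup $C\subset E'[p]$ of order $p$ is the kernel of reduction at $p$. Concretely, $C\simeq\F(\phi)$ with $\phi$ ramified at $p$, restricting to the canonical subgroup, and $E'[p]/C\simeq\F(\psi)$ with $\psi$ unramified at $p$. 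This is the curve for which we expect $\mu(\cL_{E'}^\Gr)=0$, consistent with Greenberg's prediction that the "right" curve avoids the subgroup $\mu_p$-type obstruction. When $p$ is multiplicative the same choice is made using the Tate parametrization.

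To show $\mu(\cL^\Gr_{E'})=0$, we compute $\mu$ through residual Selmer groups. By the standard argument (Greenberg's control plus the fact that $\fX_\Gr$ has no nonzero finite submodules over $\bQ_\infty$ once the $\mu$-part is handled), $\mu(\fX_\Gr(E'/\bQ_\infty))=0$ if and only if $\Sel(\bQ_\infty, E'[p])$, defined with the residual Greenberg local conditions, is finite. We therefore work with the exact sequence
\[
0\to \F(\phi)\to E'[p]\to \F(\psi)\to 0
\]
and the induced sequence of residual Selmer groups over $\bQ_\infty$, with local condition at $p$ given by the ordinary filtration, which is exactly $C$ by our choice. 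We obtain
\[
\H^0(\bQ_\infty,\F(\psi))\to \Sel(\bQ_\infty,\F(\phi))\to\Sel(\bQ_\infty,E'[p])\to\Sel(\bQ_\infty,\F(\psi)).
\]
The character Selmer groups are controlled by Iwasawa theory of abelian fields. $\Sel(\bQ_\infty,\F(\psi))$ with unramified (or relaxed away from $p$) local condition is the $\psi$-part of $\Hom(X_\infty/p,\F)$ for the unramified Iwasawa module $X_\infty$ over $\bQ(\psi)_\infty$, which is finite by Ferrero--Washington. $\Sel(\bQ_\infty,\F(\phi))$ has the strict or "ramified" local condition at $p$ and is handled by reflection: by Poitou--Tate, its size is governed by the $\omega\phi^{-1}=\psi$-part of a $p$-ramified Iwasawa module, whose $\mu$ vanishes again by Ferrero--Washington (Iwasawa's theorem that $\mu=0$ for $X_\infty$ implies $\mu=0$ for the $p$-ramified module). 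Local conditions at the bad primes $\ell\neq p$ only contribute finitely many cohomology groups of $\bQ_{\infty,w}$, which are finite since $\ell$ is finitely decomposed in $\bQ_\infty$. Hence $\Sel(\bQ_\infty,E'[p])$ is finite and $\mu(\cL_{E'}^\Gr)=0$.

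The main obstacle is matching local conditions precisely. We must check that the Greenberg condition on $E'[p]$ restricts and projects to the expected conditions on $\F(\phi)$ and $\F(\psi)$ (this is where the choice of $E'$ is essential), and that the non-primitive behavior at bad primes, especially additive primes where $\phi$ or $\psi$ may be ramified, does not introduce a $\Lambda$-corank. First I would try working with $\Sigma$-imprimitive Selmer groups (dropping conditions at $\ell\in\Sigma\setminus\{p\}$), where the comparison is cleaner, and then recovering the primitive statement using the fact that imprimitive and primitive $\mu$ agree, since the local terms $\H^1(\bQ_{\infty,w},E'[p^\infty])$ have $\mu=0$. For the analytic part in the good ordinary case, the Iwasawa Main Conjecture (\cite{GV00}, \cite{CGS}, \cite{KY24}) gives $\cL_{E'}^\Gr=\cL_{E'}^\MSD$ up to a unit, so $\mu(\cL_{E'}^\MSD)=0$ for the same $E'$. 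This completes Greenberg's conjecture in the good case and leaves only the algebraic part in the multiplicative case, as stated.
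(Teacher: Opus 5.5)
\textbf{There is a genuine gap: your choice of $E'$ is wrong in the case the paper actually has to handle.}

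Suppose the character ramified at $p$ is odd, e.g.\ $\{\phi,\psi\}=\{\mathbf{1},\omega\}$. Then your $C$, the kernel of reduction, is a ramified, odd rational subgroup. This is precisely the ``$\mu_p$-type'' obstruction you say the right curve should avoid. \cref{Schformula} then gives $\mu^\alg(E')=\mu^\alg(E'/C)+1\geq 1$. For example, for $p=5$ in the isogeny class of $X_0(11)$, your recipe selects a curve containing $\mu_5$ and never $X_1(11)$.

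Your computation goes wrong at the local conditions. Since $C=\Fil^+(E'[p])$, every class coming from $\H^1(\bQ_{\infty,p},C)$ dies in $\H^1(I_{\infty,p},E'[p]/C)$. So the condition induced at $p$ on the sub $\F(\phi)$ is the relaxed one, not the strict one. For odd $\phi$, $\H^1(\bQ^\Sigma/\bQ_\infty,\F(\phi))$ is infinite: its $\F\llbracket T\rrbracket$-corank is at least $1$ by the global Euler characteristic formula over the totally real layers $\bQ_n$. No reflection argument makes it finite.

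When the ramified character is even, your d\'evissage does go through. That is case (i), already due to Greenberg--Vatsal.

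\textbf{The missing idea for the remaining case.} Here the correct curve has the unramified even character $\phi$ as its rational sub, with $E'[p]$ non-split. No character-by-character d\'evissage can work for it. Locally $E'[p]\simeq\F(\phi)\oplus\Fil^+$ with $\Fil^+\simeq\F(\psi)$, so the quotient $\F(\psi)$ (ramified and odd) inherits the relaxed condition at $p$. Hence $\H^1(\bQ^\Sigma/\bQ_\infty,\F(\psi))$ is again infinite, and your three-term sequence gives no bound.

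What is needed is that the image of $\Sel^\Sigma_\Gr(E'[p]/\bQ_\infty)$ in that group is finite. This must use non-splitness: a curve with $E[p]\simeq\F(\mathbf{1})\oplus\F(\omega)$ contains $\mu_p$ and has $\mu\geq 1$. Your proposal never invokes that hypothesis.

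The paper proves it by restricting to $L=\bQ(E[p])$, where $E[p]=\F(\phi)\oplus\Fil^+$ as $G_L$-modules.
\begin{enumerate}
\item Restricted classes are $\Gal(L/\bQ)$-invariant. A nonzero invariant homomorphism with values in the line $\Fil^+$ would make $\Fil^+$ a $G_\bQ$-stable line, contradicting non-splitness. Hence $\proj_L\circ\res_{L/\bQ}$ has finite kernel.
\item The classical fine $\mu$-invariant of $L$ vanishes, by Ferrero--Washington for the abelian field $\bQ(\phi,\psi)$ plus the $p$-extension principle for $L/\bQ(\phi,\psi)$. So the $\F(\phi)$-component localizes at the primes above $p$ with finite kernel.
\item The Greenberg condition forces the local image of this component to be unramified, hence to lie in a finite group.
\end{enumerate}
Together these show the residual Selmer group is finite.
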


When the cyclotomic Iwasawa Main Conjecture in~\cite{KY24} is extended to multiplicative primes, the analytic part of Greenberg's conjecture will automatically hold as well.

 When $p=2$ is a good ordinary Eisenstein prime for $E$, one also knows the algebraic part of Greenberg's conjecture (that is, the original conjecture~\cite[Conjecture 1.11]{Gr99}) holds. This is the main result of another paper by the authors~\cite{LY26a}.

 A key ingredient towards the proof of the main theorem for odd $p$ is the vanishing of the fine $\mu$-invariants of the $p$-th division field $\bQ(E[p])$ attached to $E$. More precisely, we have the following.

 \begin{maintheorem}\label{classical}
     Let $E/\bQ$ be an elliptic curve, and let $p$ be an odd ordinary Eisenstein prime for $E$. Then over the $p$-th division field $\bQ(E[p])$ of $E$, the following $\mu$-invariants vanish (see~\cref{introiwa} and~\cref{deffine} for definitions):
     \begin{itemize}
         \item[(i)] The classical $\mu$-invariant;
         \item[(ii)] The classical fine $\mu$-invariant;
         \item[(iii)] The fine $\mu$-invariant for $E$.
     \end{itemize}
 \end{maintheorem}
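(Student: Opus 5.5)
The field $K\defeq\bQ(E[p])$ is an abelian extension of $\bQ$. Since $E[p]$ is reducible, choose a basis in which the Galois action is upper triangular with diagonal characters $\phi,\psi$, where $\phi\psi=\omega$ is the mod $p$ cyclotomic character. The image of $G_\bQ$ in $\GL_2(\F_p)$ then lies in a Borel subgroup $B$. The key observation is that the unipotent radical of $B$ has order $p$.

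Let $K_0\subset K$ be the fixed field of the intersection of the image with the unipotent radical. Then $K_0=\bQ(\mu_p,\phi)$ is abelian over $\bQ$, and $K/K_0$ is either trivial or cyclic of degree $p$. Hence $K$ is a $p$-extension of an abelian field. We proceed in three steps.

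\textbf{Step (i), the classical $\mu$-invariant.} By Ferrero--Washington, $\mu=0$ for the cyclotomic $\bZ_p$-extension $K_{0,\infty}$ of the abelian field $K_0$. By Iwasawa's theorem, vanishing of the classical $\mu$-invariant is preserved in finite $p$-extensions of $\bZ_p$-extensions: if $L/F$ is Galois of $p$-power degree and $\mu(F_\infty)=0$, then $\mu(L_\infty)=0$. Applying this to $K_\infty/K_{0,\infty}$ gives (i). One should check that Iwasawa's theorem needs no ramification hypothesis, which is true for $\mu$ (via the Riemann--Hurwitz/Kida-type genus-theory argument, or Iwasawa's original proof).

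\textbf{Step (ii), the classical fine $\mu$-invariant.} The fine module is the Galois group of the maximal unramified abelian pro-$p$ extension of $K_\infty$ in which all primes above $p$ split completely. It is a quotient of the unramified Iwasawa module $X(K_\infty)$. A quotient of a finitely generated torsion $\Lambda$-module with $\mu=0$ has $\mu=0$, so (ii) follows from (i).

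\textbf{Step (iii), the fine $\mu$-invariant for $E$.} Since $K=\bQ(E[p])$, the group $E[p]$ is a trivial Galois module over $K$, so $E[p]\simeq(\bZ/p)^2\simeq\mu_p^{2}$ as $G_K$-modules. By the result of Coates--Sujatha, vanishing of the fine $\mu$-invariant of $E$ over $K_\infty$ is equivalent to vanishing of the classical fine $\mu$-invariant of $K_\infty$; alternatively, one can invoke the paper's later comparison in \cref{deffine}. Concretely, the fine Selmer group of $E[p]$ over $K_\infty$ is $\Hom(Y,E[p])$ up to finite error terms coming from primes of bad reduction, which are finitely decomposed in $K_\infty$. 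Here $Y$ is the fine module mod $p$. Finiteness of $Y/p$ is exactly $\mu=0$, so (ii) gives (iii).

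The main obstacle is Step (iii): the fine Selmer group involves local conditions at bad primes and at $p$. One must control the kernel and cokernel of the comparison with $\Hom(Y,E[p]^{\oplus})$ uniformly and check they are $\Lambda$-cotorsion with finite $\mathbb{F}_p$-dimension. This is where I would cite Coates--Sujatha's Theorem 3.4 directly.
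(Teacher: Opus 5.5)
Your argument is correct in substance, but the opening sentence is false and should be deleted. $\bQ(E[p])$ is \emph{not} abelian over $\bQ$ when $0\to\F(\phi)\to E[p]\to\F(\psi)\to 0$ is non-split. In that case the image of $G_\bQ$ contains the whole unipotent radical together with some $g$ with $\phi(g)\neq\psi(g)$, and these do not commute (note $\phi\neq\psi$, since exactly one of them is ramified at $p$; cf.\ \cref{Q(E[p])}). Nothing later uses that sentence: you correctly treat $K$ as an extension of degree $1$ or $p$ of the abelian field $K_0=\bQ(\mu_p,\phi)=\bQ(\phi,\psi)$. As written, though, it contradicts your own reduction.

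Also, the ``main obstacle'' in Step (iii) is milder than you suggest. Once $G_K$ acts trivially on $E[p]$, the residual imprimitive fine Selmer group of $E[p]$ over $K_\infty$ is literally two copies of the Pontryagin dual of $A[p]^{\fn,\Sigma}_{K,\infty}$. The only error terms are then:
\begin{itemize}
\item the finite groups $\H^0(-,E[p^\infty])/p$ that arise in passing from $E[p]$ to $E[p^\infty]$;
\item the primitive/imprimitive comparison at primes away from $p$, which has $\mu=0$.
\end{itemize}
This is exactly the bookkeeping of \cref{imprimK}, which the paper itself uses in \cref{sec:4}.

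Your route to (iii) is genuinely different from the paper's. The paper first proves all three vanishing statements over $\bQ(\phi,\psi)$: (i) and (ii) by Ferrero--Washington, and (iii) by Coates--Sujatha, since $E$ acquires a rational $p$-torsion point there. It then lifts each of the three separately along $\bQ(E[p])/\bQ(\phi,\psi)$ with its own Nakayama argument. Restriction on residual imprimitive Selmer groups has finite kernel and cokernel onto $G$-invariants, so the $G$-coinvariants of the dual are finite, and hence the dual is finitely generated over $\bZ_p$.

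You lift only the classical $\mu$, citing Iwasawa's theorem; the paper's Nakayama argument is essentially a proof of that theorem in this setting. You then get (ii) as a quotient and deduce (iii) from (ii) directly over $\bQ(E[p])$ using triviality of $E[p]$. That is precisely the implication (i)$\Rightarrow$(iii) which the introduction attributes to Coates--Sujatha.

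Your chain is shorter and needs no input about $E$ over the intermediate field. The paper's chain is self-contained for the lifting step and gives the vanishing over $\bQ(\phi,\psi)$ as a byproduct. Its lifting principle also applies to any finite coefficient module, not just trivial ones.
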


In fact, the equivalence of the (i) (which implies (ii)) and (iii) is already known in~\cite{CS05}, and at least point (i) should already be essentially known prior to Iwasawa~\cite{Iwa81}. Indeed, it is not hard to show that if the classical $\mu$ is $0$ for a number field $K$, then it is $0$ for a $p$-extension $L/K$. What's new is the observation that in the setting of the above theorem, $\bQ(E[p])$ is always a $p$-extension of an abelian field, where Ferrero--Washington theorem can be applied, and more importantly, that (i) often implies the vanishing of Greenberg's $\mu$ for $E$ over $\bQ$. It should be mentioned that we do not need (iii) to prove~\cref{mainthm}. For the purpose of self-containedness, we will include a proof of~\cref{classical} in~\cref{vanishingQ(E[p])}.

Our proof actually yields a more general result that can also be applied in the residually irreducible setting. Namely, the vanishing of Greenberg's $\mu$ over $\bQ$ can be deduced from the vanishing of some classical fine $\mu$ over certain extensions. More precisely, we have the following theorem, whose proof is given in~\cref{sec:4}.

\begin{maintheorem}\label{irred}
 Let $E$ be an elliptic curve and $p$ be an odd prime of ordinary reduction for $E$. Assume $E[p]$ is irreducible as a $G_\bQ$-module. Let $L=\bQ(E[p])$. Then $\mu(A^\fn_{L,\infty})=0\Rightarrow\mu(\Sel_\Gr(E/\bQ_\infty)^\vee)=0.$
 \end{maintheorem}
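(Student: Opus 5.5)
The plan is to reduce the statement to the $\Lambda/p$-corank of a residual cohomology group over $\bQ_\infty$, and then to compare that group, after restriction to $L_\infty$, with the fine class group $A^\fn_{L,\infty}$. Let $\Sigma$ be the set of primes dividing $pN_E\infty$. Since $E[p]$ is irreducible, $E(\bQ_\infty)[p]=0$. Hence
\[
H^1(\bQ_\Sigma/\bQ_\infty,E[p])\isom H^1(\bQ_\Sigma/\bQ_\infty,E[p^\infty])[p],
\]
and $\Sel_\Gr(E/\bQ_\infty)[p]$ agrees, up to finite error, with the residual Greenberg Selmer group of $E[p]$. Recall that for the cotorsion module $\fX_\Gr(E/\bQ_\infty)$ we have $\mu=0$ if and only if $\Sel_\Gr(E/\bQ_\infty)[p]$ is finite.

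\textbf{Step 1 (surjectivity of localization).} Because $\fX_\Gr(E/\bQ_\infty)$ is $\Lambda$-torsion and $E(\bQ_\infty)[p]=0$, Greenberg's surjectivity theorem (via Poitou--Tate) gives an exact sequence
\[
0\to \Sel_\Gr(E/\bQ_\infty)\to H^1(\bQ_\Sigma/\bQ_\infty,E[p^\infty])\to \mathcal{P}\to 0 .
\]
Here $\mathcal{P}$ is the product of the local quotients: at $p$ it is $H^1$ of the inertia-invariant quotient $E[p^\infty]/\Fil^+$, and at $\ell\neq p$ the contributions are cotorsion with $\mu=0$. The dual of the $p$-part has $\Lambda$-rank $1$ and $\mu=0$ (after discarding a finite submodule); this is a local computation using that the unramified character is nontrivial mod $p$, or handled directly otherwise. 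Dualizing and using additivity of $\mu$ (defined via the torsion submodule for a module of rank $1$), I expect to get
\[
\mu\bigl(\Sel_\Gr(E/\bQ_\infty)^\vee\bigr)=\mu\bigl(H^1(\bQ_\Sigma/\bQ_\infty,E[p^\infty])^\vee\bigr).
\]
Equivalently, $\Sel[p]$ is finite if and only if $H^1(\bQ_\Sigma/\bQ_\infty,E[p])$ has $\Lambda/p$-corank exactly $1$.

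\textbf{Step 2 (Euler characteristic).} The residual global Euler characteristic formula over $\bQ_\infty$ (using $H^0=0$ and $d^-=1$) gives
\[
\corank_{\Lambda/p}H^1(\bQ_\Sigma/\bQ_\infty,E[p])=1+\corank_{\Lambda/p}H^2(\bQ_\Sigma/\bQ_\infty,E[p]).
\]
It therefore suffices to show that $H^2(\bQ_\Sigma/\bQ_\infty,E[p])$ is finite. Since local $H^2$ over $\bQ_{\infty,v}$ is finite, by Poitou--Tate this is equivalent, following Coates--Sujatha, to finiteness of the residual fine Selmer group $R(E[p]/\bQ_\infty)$.

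\textbf{Step 3 (restriction to $L_\infty$).} Let $G=\Gal(L_\infty/\bQ_\infty)$, which is finite. The kernel of
\[
R(E[p]/\bQ_\infty)\to R(E[p]/L_\infty)
\]
lies in $H^1(G,E[p])$, which is finite, so finiteness descends from $L_\infty$. Over $L_\infty$ the module $E[p]$ is trivial as a Galois module, so $R(E[p]/L_\infty)\iso \Hom(Y,\F_p)^{\oplus 2}$. Here $Y$ is the Galois group of the maximal abelian $p$-extension of $L_\infty$ unramified everywhere and completely split at all primes above $\Sigma$, i.e. the dual of the fine class group $A^\fn_{L,\infty}$. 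By hypothesis $\mu(A^\fn_{L,\infty})=0$, and $A^\fn_{L,\infty}$ is finitely generated over $\bZ_p$, so $\Hom(Y,\F_p)$ is finite. This gives the finiteness required in Step 2 and closes the chain.

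\textbf{Main obstacle.} I expect the hardest part to be Step 1. One must check that the local term at $p$ really has $\mu=0$ and that no hidden $p$-torsion arises in the cokernel. One must also be careful that $p$ may divide $\#G$ in Step 3, so only the kernel bound (not a corestriction argument) may be used. If surjectivity causes trouble, I would instead compare $\Sel_\Gr(E/\bQ_\infty)[p]$ directly with $H^1(\bQ_\Sigma/\bQ_\infty,E[p])$. Since the latter has corank $1$, the localization at $p$ into a corank-$1$ target must have finite kernel unless the image is finite. The latter case is ruled out because it would force $\Sel_\Gr(E/\bQ_\infty)^\vee$ to have positive rank.
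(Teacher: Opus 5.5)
Steps 2 and 3 of your proposal are sound. What they prove, though, is only $\corank_{\Lambda/p}\H^1(\bQ^\Sigma/\bQ_\infty,E[p])=1$, i.e.\ vanishing of the \emph{fine} $\mu$-invariant of $E$ over $\bQ_\infty$.

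\textbf{The gap is Step 1.} Only the implication ``$\Sel_\Gr(E/\bQ_\infty)[p]$ finite $\Rightarrow$ corank $1$'' is true.
\begin{itemize}
\item The additivity of $\mu$ you invoke fails for non-torsion modules. In $0\to p\Lambda\to\Lambda\to\Lambda/p\to 0$ the $\mu$-invariants are $0,0,1$.
\item The converse implication is false. Take $E=X_0(11)$ and $p=5$. Then $E[5]\cong\bZ/5\bZ\oplus\mu_5$ with $\Fil^+(E[5])=\mu_5$, so $\H^1(\bQ^\Sigma/\bQ_\infty,E[5])$ has $\Lambda/p$-corank $0+1=1$. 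Yet its corank-one piece $\H^1(\bQ^\Sigma/\bQ_\infty,\mu_5)$ lies entirely in $\Sel^\Sigma_\Gr(E[5]/\bQ_\infty)$, and indeed $\mu^\alg(E)=1$.
\item Irreducibility enters your argument only through $E(\bQ_\infty)[p]=0$, which affects nothing but finite error terms. So your chain would equally ``prove'' $\mu=0$ in this example.
\item The fallback in your last paragraph is circular. If the image of $\H^1(\bQ^\Sigma/\bQ_\infty,E[p])$ in the corank-one local quotient at $p$ were finite, the residual Selmer group would have $\Lambda/p$-corank $1$. That means $\mu(\Sel_\Gr(E/\bQ_\infty)^\vee)\geq 1$, not that $\Sel_\Gr(E/\bQ_\infty)^\vee$ has positive $\Lambda$-rank, so nothing contradicts Kato's torsion theorem.
\end{itemize}

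\textbf{What is missing.} Fine-$\mu$ vanishing only says the corank-one part of $\H^1(\bQ^\Sigma/\bQ_\infty,E[p])$ injects, up to finite kernel, into $\H^1(\bQ_{\infty,p},E[p])$. You must also show it is not absorbed by the classes whose values on inertia lie in $\Fil^+(E[p])$. This is where irreducibility has to be used in earnest, and it is where the paper uses the hypothesis $\mu(A^\fn_{L,\infty})=0$. The paper's route:
\begin{itemize}
\item Restrict to $L_\infty$. A class from $\bQ_\infty$ becomes a homomorphism $f\colon \Gal(L^\Sigma/L_\infty)\to E[p]$ with $f(\sigma x\sigma^{-1})=\sigma f(x)$ for $\sigma\in G_{\bQ_\infty}$.
\item Since $\Fil^+(E[p])$ is not $\Gal(L/\bQ)$-stable and $L\cap\bQ_\infty=\bQ$, no nonzero such $f$ takes values in $\Fil^+(E[p])$.
\item Then $\mu(A^\fn_{L,\infty})=0$ gives that localizing $\Hom(\Gal(L^\Sigma/L_\infty),\F)$ at all primes above $p$ has finite kernel.
\end{itemize}
If you follow that route, be careful about the comparison step. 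The same equivariance shows that at a prime $\sigma w$ of $L_\infty$ above $p$, the Greenberg condition puts the values of $f$ on inertia in $\sigma\Fil^+(E[p])$, not in $\Fil^+(E[p])$. So projecting modulo one fixed line and comparing with the fine Selmer group of $L$ needs an additional argument at the primes where $\sigma\Fil^+(E[p])\neq\Fil^+(E[p])$.
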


\subsection{Method of proof and outline of the paper}
When $p$ is an odd Eisenstein prime for $E$, there is a short exact sequence of $G_\bQ$-modules\begin{equation*}
    0\to \F(\phi) \to E[p] \to \F(\psi) \to 0,
\end{equation*}
where $\phi,\psi$ are characters $G_\bQ\to \GL_1(\F)$, and $\F(\theta)$ denotes the $G_\bQ$-module $\F$ with the action given by $\theta\in\{\phi,\psi\}$. One knows that $\phi\psi=\omega$, the mod $p$ cyclotomic character. Thus when $E$ is ordinary at $p$, exactly one of $\phi,\psi$ is unramified, and exactly one of them is odd.

When one of $\phi,\psi$ is unramified and odd (so the other is ramified and even), Greenberg and Vatsal~\cite{GV00} have proved that the algebraic and analytic $\mu$-invariants of $E$ are $0$, by passing to the study of the $\mu$-invariants of $\phi$ and $\psi$, where Ferrero--Washington theorem can be applied. When $\phi$ or $\psi$ is ramified and odd (say, if it equals $\omega$), however, Ferrero--Washington theorem does not apply and in fact one knows its (algebraic) $\mu$-invariant will be positive. It is not hard to see if $E[p]$ has a sub-representation which has positive $\mu$-invariant, $E$ will have positive $\mu$-invariant too. In fact, the algebraic part of Greenberg's conjecture can be reformulated as the following.

\begin{conjecture*}[algebraic part of Greenberg's conjecture for good ordinary primes, equivalent form]\label{equivform}
    Let $E/\bQ$ be an elliptic curve. Assume that $p$ is an ordinary prime for $E$. Then $\mu(\fX_\Gr(E/\bQ_\infty))=0$ in the following situations:
    \begin{itemize}
        \item $E[p]$ is an irreducible $G_\bQ$-module, or 
        \item $E[p]$ fits into a short exact sequence of $G_\bQ$-modules \begin{equation}\label{seq:E[p]}0\to \F(\phi) \to E[p] \to \F(\psi) \to 0,\end{equation}
        where\begin{enumerate}
            \item $\phi$ is unramified at $p$ and odd, or ramified at $p$ and even, or
            \item $\phi$ is unramified at $p$ and even (so $\psi$ is ramified at $p$ and odd), and the extension~\eqref{seq:E[p]} is non-split.
        \end{enumerate}
    \end{itemize}
\end{conjecture*}

That this formulation is equivalent to the original one~\cite[Conjecture 1.11]{Gr99} is a consequence of Schneider's formula~\cite[Corollary 1]{MR2152940}. See~\cref{Grconj} for more details. There is also an analytic version of Schneider's formula~\cite[Proposition 4.12]{Stevens}, but we will mainly deal with the algebraic version and only obtain results on the analytic side via Iwasawa Main Conjecture at the end of the paper.

The irreducible case seems out of reach as of now, and as is mentioned above, the subcase (i) was proved in~\cite{GV00} (although they only explicitly treated the good ordinary case. In terms of the algebraic $\mu$-invariant, the multiplicative case makes no difference). The rest of this subsection is devoted to an exposition of our proof of subcase (ii). For simplicity, we assume $\phi=\mathbf{1}$ and $\psi=\omega$.

Let $\Sigma$ be a finite set of places of $\bQ$ containing $p$, the Archimedean place $\infty$, and those where $E$ has bad reduction. Denote by $\bQ^\Sigma$ the maximal extension of $\bQ$ unramified outside $\Sigma$. Let $\Fil^+(E[p^\infty])\defeq\ker\big\{E[p^\infty]\to\tilde{E}[p^\infty]\big\}$ where $\tilde{E}$ is the reduction of $E$ at $p$. Then Greenberg's Selmer group $\Sel_\Gr(E/\bQ_\infty)$ is defined as the kernel of the following global-to-local map
\[\H^1(\bQ^\Sigma/\bQ_\infty, E[p^\infty])\to\prod_{w\in\Sigma,w\nmid p}\H^1(\bQ_{\infty,w},E[p^\infty])\times\frac{\H^1(\bQ_{\infty,w},E[p^\infty])}{\H^1_\Gr(\bQ_{\infty,p},E[p^\infty])},
\]
where $\H^1_\Gr(\bQ_{\infty,p},E[p^\infty])\defeq\ker\big\{H^1(\bQ_{\infty,p},E[p^\infty])\to\H^1(I_{\infty,p},E[p^\infty]/\Fil^+(E[p^\infty]))\big\},$ for $\bQ_{\infty,p}$ the completion of $\bQ$ at the unique place above $p$ which we also denote by $p$, and $I_{\infty,p}\subset G_{\bQ_{\infty,p}}$ the inertia subgroup at $p$.

It was observed in~\cite{GV00} the $\mu$-invariants of $\H^1(\bQ_{\infty,w},E[p^\infty])$ for $w\nmid p$ are $0$, so one can consider the following \emph{imprimitive} Selmer group\[
\Sel_\Gr^\Sigma(E/\bQ_\infty)\defeq\big\{\H^1(\bQ^\Sigma/\bQ_\infty, E[p^\infty])\to\frac{\H^1(\bQ_{\infty,p},E[p^\infty])}{\H^1_\Gr(\bQ_{\infty,p},E[p^\infty])}\big\},
\]
whose Pontryagin dual will have the same $\mu$-invariant as the primitive one.

Then one can notice that under the surjectivity of the above two global-to-local map defining the Selmer groups (also established in \emph{loc. cit.}), the vanishing (but not the exact value in general) of the $\mu$-invariant of $(\Sel^\Sigma_\Gr(E/\bQ_\infty))^\vee$ is equivalent to that of $(\Sel^\Sigma_\Gr(E[p]/\bQ_\infty))^\vee$, where $\Sel^\Sigma_\Gr(E[p]/\bQ_\infty)$ is the \emph{residual} imprimitive Selmer group defined as\[
\Sel^\Sigma_\Gr(E[p]/\bQ_\infty)\defeq\ker\big\{\H^1(\bQ^\Sigma/\bQ_\infty, E[p])\to\frac{\H^1(\bQ_{\infty,p},E[p])}{\H^1_\Gr(\bQ_{\infty,p},E[p])}\big\},
\]
where \[\H^1_\Gr(\bQ_{\infty,p},E[p])\defeq\ker\big\{H^1(\bQ_{\infty,p},E[p])\to\H^1(I_{\infty,p},E[p]/\Fil^+(E[p]))\big\},\]
with $\Fil^+(E[p])\defeq\Fil^+(E[p^\infty])[p].$

Recall that the ordinariness of $E$ implies that $E[p]$ has an unramified quotient (which is nothing but $\Fil^-(E[p])\defeq E[p]/\Fil^+(E[p])$) as a $G_{\bQ_p}$-module. Our assumption that the (non-direct) sub-$G_\bQ$-module of $E[p]$ is unramified at $p$ then forces $E[p]$ to be split over $G_{\bQ_p}$. That is, $E[p]=\F(\mathbf{1})\oplus \F(\omega)$ as $G_{\bQ_p}$-modules. This induces the splitting of local cohomology groups\[
\H^1(\bQ_{\infty,p},E[p])=\H^1(\bQ_{\infty,p},\F(\mathbf{1}))\oplus\H^1(\bQ_{\infty,p},\F(\omega)).
\]
We now explain one way to think about the vanishing of $\mu$-invariant of certain Selmer groups via tracking global-to-local images, which inspired our proof. It is solely for demonstration purpose, and we do not need the following vanishing of fine $\mu$-invariant as an input. Now the vanishing of \emph{fine} $\mu$-invariant of $E$ in~\cite{CS05} shows that the kernel of the global-to-local map\[
\H^1(\bQ^\Sigma/\bQ_\infty,E[p]) \xrightarrow{\loc}\H^1(\bQ_{\infty,p},E[p])=\H^1(\bQ_{\infty,p},\F(\mathbf{1}))\oplus\H^1(\bQ_{\infty,p},\F(\omega)) 
\]
has vanishing $\mu$-invariant, thus the only $\mu$-invariant of $\H^1(\bQ^\Sigma/\bQ,E[p])$ (which equals $1$ by~\cite[Proposition 5.8]{Gr99}) is completely mapped to $\H^1(\bQ_{\infty,p},E[p])$. On the other hand, the study of Greenberg's $\mu$-invariant is a finer task. Note that $\Fil^+(E[p])=\F(\omega)$ and $\Fil^-(E[p])=\F(\mathbf{1})$, so $\Sel^\Sigma_\Gr(E[p]/\bQ_\infty)$ can alternatively be written as \[\ker\big\{\H^1(\bQ^\Sigma/\bQ_\infty, E[p])\to\frac{\H^1(\bQ_{\infty,p},E[p])}{\H^1(\bQ_{\infty,p},\F(\omega))}\big\}.\]

The question is then to determine where the `$\mu=1$'-part of the global-to-local image lands in $\H^1(\bQ_{\infty,p},E[p])$. For example, if the `$\mu=1$'-part completely lands in $\H^1(\bQ_{\infty,p},\F(\omega))$ (meaning $\mu(\im(\loc)\cap\H^1(\bQ_{\infty,p},\F(\omega)))=1$), then this kernel is forced to have $\mu$-invariant equal to $1$. We need to show this is not the case in our situation.

To control the global-to-local image, one can go to the $p$-division field $\bQ(E[p])$ of $E$, where $E[p]$ splits into a direct sum $\F(\mathbf{1})\oplus\F(\omega)$. The key is to analyze the following commutative diagram\begin{center}{\small\begin{tikzcd}
			 \H^1(\bQ^\Sigma/\bQ_\infty,E[p]) \ar[r,"\res_{\bQ(E[p])/\bQ}"]\ar[d,"loc_\bQ"]&[3em] \H^1(\bQ(E[p])^\Sigma/\bQ(E[p])_\infty,E[p])\ar[d,"loc_{\bQ(E[p])}"]\\
			 \H^1(\bQ_{\infty,p},E[p])\ar[r,"\prod_{w_i\mid p}\res"]&[3em]\prod_{w_i\mid p}\H^1(\bQ(E[p])_{\infty,w_i},E[p]),\\	 	\end{tikzcd}}
\end{center}
which also reads as
\begin{center}{\small\begin{tikzcd}\label{gotoQ(E[p])}
			
			 \H^1(\bQ^\Sigma/\bQ_\infty,E[p]) \ar[r,"\res"]\ar[d,"loc_\bQ"]& \H^1(\bQ(E[p])^\Sigma/\bQ(E[p])_\infty,\F(\mathbf{1}))\oplus\H^1(\bQ(E[p])^\Sigma/\bQ(E[p])_\infty,\F(\omega)))\ar[d,"loc_{\bQ(E[p])}"]\\
			 \H^1(\bQ_{\infty,p},E[p])\ar[r,"\prod_{w_i\mid p}\res"]&\prod_{w_i\mid p}\H^1(\bQ(E[p])_{\infty,w_i},\F(\mathbf{1}))\oplus\H^1(\bQ(E[p])_{\infty,w_i},\F(\omega)).\\	 	\end{tikzcd}}
\end{center}

We will show that under the assumption that the sequence~\eqref{seq:E[p]} is non-split, \begin{itemize}
    \item[(i)] the `$\mu=1$'-part of $\im(\res_{\bQ(E[p])/\bQ})$ doesn't fully land in $\H^1(\bQ(E[p])_\infty,\F(\omega))$;
    \item[(ii)] Further, the `$\mu=1$'-part of $\loc_{\bQ(E[p])}(\im(\res_{\bQ(E[p])/\bQ}))$ doesn't fully land in $\prod_{w_i\mid p}\H^1(\bQ(E[p])_{\infty,w_i},\F(\omega))$.
\end{itemize}  Then from the commutativity of this diagram, we know the `$\mu$=1'-part of $\im(\loc_\bQ)$ cannot fully land in $\H^1(\bQ_{\infty,p},\F(\omega))$, which is exactly what we desire. See~\cref{sec:3} for more precise statements.

To make sense of (i), one uses the fact that $\res_{\bQ(E[p])/\bQ}$ has finite kernel, and the non-splitness of~\eqref{seq:E[p]}. To make sense of (ii), one needs the vanishing of classical fine $\mu$-invariant of $\bQ(E[p])$ , which in our case is equivalent to the vanishing of $\mu$-invariant of \[\ker\big\{\H^1(\bQ(E[p])_\infty,\F(\mathbf{1}))\to\prod_{w_i\mid p}\H^1(\bQ(E[p])_{\infty,w_i},\F(\mathbf{1}))\big\}.\]

Finally, we use the Iwasawa Main Conjecture to confirm the behavior of analytic $\mu$-invariants predicted by Stevens.

In the first $2$ sections, we discuss the basics about Selmer groups, $p$-adic $L$-functions and in general, Iwasawa theory. In~\cref{sec:3}, we provide the ingredients into the above proof, including the vanishing of classical fine $\mu$-invariants of $\bQ(E[p])$ needed in step (ii) above. Finally, in~\cref{sec:4}, we discuss some potential applications of our arguments in the irreducible setting.

\subsection{Relation to previous works}
It seems reasonable to believe that Greenberg's conjecture for Selmer groups for elliptic curves was posed in its first explicit form during the period between the two works of Greenberg~\cite[formula (75)]{Gr89} and~\cite[Conjecture 1.11]{Gr99}. At the same time, Stevens~\cite[Remark (4.14)]{Stevens} took a compatible guess on the behavior of $\mu$-invariants for $p$-adic $L$-functions. Ever since then, little progress has been made in the Eisenstein case, except in Trifkovi\'c's work~\cite{MR2152940} where he utilized methods from flat cohomology in successfully finding infinitely many non-isomorphic elliptic curves satisfying Greenberg's conjecture.

In a different direction, Hachimori~\cite[Theorem 6.4]{Hac04} provided a link between the vanishing of $\mu$-invariants of certain elliptic curves with reducible residual representation and the vanishing of $\mu$-invariants of certain number field. 

Our method is different from Trifkovi\'c's and Hachimori's. We work with usual Galois cohomology and study the vanishing of $\mu$-invariant of the elliptic curve directly, using the vanishing results for relevant classical $\mu$-invariants and fine $\mu$-invariants, where~\cite{CS05} is an input. Moreover, the aforementioned papers only dealt with the special case where $E[p]$ is a non-split extension of $\F(\omega)$ by $\F(\mathbf{1})$ for small primes $p=3$ (\cite{Hac04},~\cite{MR2152940}) and $p=5$ (\cite{MR2152940}), and assumed good ordinary reduction, whereas our results apply to any odd $p$ and arbitrary semisimplification of $E[p]$, allowing multiplicative reduction as well.

On the other hand, Ray~\cite{ray23} did some valid attempts in relating Greenberg's Selmer groups to fine Selmer groups, under some conjectural assumptions. Our result seems to confirm his conjecture as well.

\subsection{Further developments}
As Greenberg originally posed his conjecture for ordinary $p$-adic representations, it is natural to ask if the conjecture is true more generally, such as when the representation comes from higher weight modular forms. Such extensions should not be difficult and will be addressed in later work.

The key philosophy in our work is that $E[p]$, or in fact $\bQ(E[p])$, is enough to determine the vanishing of Greenberg's $\mu$-invariants, which has its origin in the work of Greenberg and Vatsal. In the last section of the paper we make it precise, and leave room for further developments in the study of such fields for more general elliptic curves.

In this work, we eventually utilized the Iwasawa Main Conjecture to obtain information on the $\mu$-invariants of $p$-adic $L$-functions. It would be interesting to see if one can study these analytic $\mu$-invariants directly.

It would also be natural to ask if one can obtain computational results like those in~\cite{LY26b}, where one can directly read from the arithmetics of $E$ information about its $\mu$-invariants, or obtain an absolute upper bound of the $\mu$-invariant. Indeed, like in the $p=2$ case, our work confirms Greenberg's conjecture that the $\mu$-invariants arise solely from the existence of ramified and odd $G_\bQ$-invariant submodule of $E[p^\infty]$, so there is a natural upper bound $\mu_E\leq n$ if $E[p^n]$ is reducible while $E[p^{n+1}]$ becomes irreducible for $n\geq 1$. An absolute upper bound would follow from an analysis of possible degrees of general $p$-isogenies.

\subsection{Notations and conventions}
For any field $L$, we denote by $G_L$ its absolute Galois group, and we write $\H^i(L,-)$ for $\H^i(G_L,-)$. If $L/K$ is an extension of fields, we also write $\H^i(L/K,-)$ for $\H^i(\Gal(L/K),-)$. Denote by $(-)^\vee=\Hom_\cts(-,\bQ_p/\bZ_p)$ the Pontryagin dual. 
\subsection{Acknowledgments}
The authors thank Francesc Castella for pointing out the works by Trifkovi\'c and Stevens and other helpful comments, and Tom Weston for helpful discussions and encouragements.

\addtocontents{toc}{\protect\setcounter{tocdepth}{2}}
\section{Preliminaries}
\subsection{Division field for elliptic curves}\label{Q(E[p])}
Let $E/\bQ$ be an elliptic curve, and $E[p]$ be its $p$-torsion for an odd $p$. In this subsection we will study useful properties of the $p$-th division field $\bQ(E[p])$ of $E$, the field by adjoining the coefficients of the $p$-torsion points of $E$. In particular, $G_{\bQ(E[p])}=\ker(G_\bQ\to \Aut(E[p]))$, and $\bQ(E[p])$ is Galois over $\bQ$ with $\Gal(\bQ(E[p])/\bQ)$ isomorphic to a (generally non-abelian) subgroup of $\GL_2(\F)$. 

When $p$ is an Eisenstein prime for $E$, $E[p]$ has the form
$\begin{pmatrix}
\phi & * \\
0 & \psi 
\end{pmatrix}$   
where $\phi,\psi: G_\bQ\to \GL_1(\F)$ are characters with $\phi\psi=\omega$. It is helpful to study the intermediate field $\bQ(\phi,\psi)$ defined as the composite of the fixed fields of $\ker(\phi)$ and $\ker(\psi)$. $\bQ(\phi,\psi)$ is then an abelian extension of $\bQ$ and $E[p]$ becomes $\begin{pmatrix}
\mathbf{1} & * \\
0 & \mathbf{1} 
\end{pmatrix}$
over $\bQ(\phi,\psi)$. Thus the index $[\bQ(E[p]):\bQ(\phi,\psi)]$ is either $1$ or $p$ depending on whether * is $0$ or not. As $\phi,\psi$ have order dividing $p-1$, the degree $[\bQ(\phi,\psi):\bQ]$ divides $(p-1)^2$ and is coprime to $p$. Moreover, when $\bQ(E[p])\ne\bQ(\phi,\psi)$, $\bQ(E[p])$ is not abelian over $\bQ$.

\subsection{Classical Iwasawa theory}\label{introiwa}
Let $\bQ_\infty=\bQ(\mu_{p^\infty})^\Delta$ be the cyclotomic $\bZ_p$-extension of $\bQ$, where $\Delta=\Gal(\bQ(\mu_p)/\bQ)$, and let $L_\infty\defeq L\bQ_\infty$ be the cyclotomic $\bZ_p$-extension of any number field $L$. Let $\Gamma\defeq \Gal(\bQ_\infty/\bQ)$, and denote by $\Lambda\defeq \bZ_p\llbracket \Gamma\rrbracket$ the Iwasawa algebra. We will always identify $\Gamma$ with a formal power series ring $\bZ_p\llbracket T\rrbracket$, where a topological generator $\gamma\in\Gamma$ is identified with $1+T$. The Iwasawa algebra is not a principal ideal domain, yet there is a nice structure theorem for finitely generated $\Lambda$-modules. If $X$ is any such module, then there is a $\Lambda$-linear morphism with finite kernel and cokernel (known as a \emph{pseudo-isomorphism}) from $X$ to a module of the form\[
\Lambda^r\oplus\bigoplus_{i}\Lambda/(f_i)\oplus\bigoplus_j\Lambda/(p^{m_j})
\]
for some $r$, $i$ and $j$, where $f_i$'s are \emph{distinguished polynomials} satisfying $f_i\equiv T^{\deg(f_i)}\pmod{p}$. Here the values $r$, $\Sigma_i \deg(f_i)$, $\Sigma_j m_j$ are the \emph{rank}, \emph{$\lambda$-invariant} and the \emph{$\mu$-invariant} of $X$, although some authors might (and in practice we do) require $r=0$ to define $\lambda$ and $\mu$-invariants.

Typically, one can attach a $\Lambda$-module to various arithmetic objects, such as number fields and elliptic curves. These special $\Lambda$-modules are usually known as~\emph{Selmer groups}. One major goal in Iwasawa theory is to study the structures of Selmer groups. We begin with some examples that can be found in~\cite[Introduction]{Sha}.

\begin{example}
    Let $K$ be a number field, and let $K_\infty\defeq K\bQ_\infty$ be its cyclotomic $\bZ_p$ extension. Then there is a tower $K\defeq K_0\subset K_1\subset ...\subset K_n\subset...$ with $[K_n:K]=p^n$ and $K_\infty=\cup_n K_n$. 
    Denote by $\Cl(K_n)[p^\infty]$ the \emph{$p$-primary} part of the class group of the $n$-the layer. Then $\varprojlim_n \Cl(K_n)[p^\infty]$ is a finitely generated torsion module over $\Lambda_K\defeq\bZ_p\llbracket\Gal(K_\infty/K) \rrbracket\iso \bZ_p\llbracket T\rrbracket$, and one can talk about its $\mu$-invariant. 
\end{example}

\begin{example}
    Let $L_\infty/K_\infty$ the maximal unramified abelian pro-$p$ extension. Denote by $A_{K,\infty}=\Gal(L_\infty/K_\infty)$. There is a canonical isomorphism $\varprojlim_n \Cl(K_n)[p^\infty]\iso A_{K,\infty}$.
\end{example}

\begin{definition}
    Let $K$ be an algebraic extension of $\bQ$, $v$ a finite place of $K$. Let $K_v^\ur$ be the maximal unramified extension of $K_v$. Then $\Gal(K_v^\ur/K_v)\iso G_{K_v}/I_v$ where $I_v\isom \Gal(\ol K_v/K_v^\ur)$ is the inertia subgroup.
\end{definition}

\begin{lemma}
    Assume $K$ is a finite extension over $\bQ$ and let $\Sigma$ be any finite set of primes containing $p$. Then \begin{align*}       
    A_{K,\infty}&\isom
    \ker\big\{\H^1(K^\Sigma/K_\infty,\bQ_p/\bZ_p)\to\prod_v \H^1(K_{\infty,v}, \bQ_p/\bZ_p)/\H^1(K_{\infty,v}^\ur/K_{\infty,v}, \bQ_p/\bZ_p)\big\}^\vee\\
    &\isom \ker\big\{\H^1(K^\Sigma/K_\infty,\bQ_p/\bZ_p)\to\prod_v \H^1(I_{\infty,v}, \bQ_p/\bZ_p)\big\}^\vee
    \end{align*}
\end{lemma}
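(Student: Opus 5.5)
We prove the lemma by class field theory together with the usual identification of continuous homomorphisms with $\H^1$ for trivial coefficients. Since $G_{K_\infty}$ acts trivially on $\bQ_p/\bZ_p$, we have
\[
\H^1(K^\Sigma/K_\infty,\bQ_p/\bZ_p)=\Hom_\cts(\Gal(K^\Sigma/K_\infty),\bQ_p/\bZ_p)=\Hom_\cts(\Gal(M_\infty/K_\infty),\bQ_p/\bZ_p),
\]
where $M_\infty$ is the maximal abelian pro-$p$ extension of $K_\infty$ unramified outside $\Sigma$. A homomorphism factors through abelian pro-$p$ quotients because the target is abelian and $p$-primary. The same identification holds locally: for each place $v$ of $K_\infty$ we have $\H^1(K_{\infty,v},\bQ_p/\bZ_p)=\Hom_\cts(G_{K_{\infty,v}},\bQ_p/\bZ_p)$ and $\H^1(I_{\infty,v},\bQ_p/\bZ_p)=\Hom_\cts(I_{\infty,v},\bQ_p/\bZ_p)$. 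The restriction maps are then just restriction of homomorphisms to decomposition and inertia groups.

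\emph{Second isomorphism.} Inflation--restriction for $I_{\infty,v}\subset G_{K_{\infty,v}}$ gives
\[
0\to \H^1(K^\ur_{\infty,v}/K_{\infty,v},\bQ_p/\bZ_p)\to\H^1(K_{\infty,v},\bQ_p/\bZ_p)\to \H^1(I_{\infty,v},\bQ_p/\bZ_p).
\]
This is exact with trivial coefficients. The quotient $\H^1(K_{\infty,v})/\H^1(K^\ur_{\infty,v}/K_{\infty,v})$ therefore injects into $\H^1(I_{\infty,v})$, so the two kernels coincide.

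\emph{Identifying the kernel.} A homomorphism $f:\Gal(M_\infty/K_\infty)\to\bQ_p/\bZ_p$ lies in the kernel exactly when it kills every inertia subgroup at every place. For $v\notin\Sigma$ this is automatic. For $v\in\Sigma$ it is the imposed local condition, and one can take the product over all $v$ or just over $v\in\Sigma$, since the conditions at $v\notin\Sigma$ are vacuous. So the kernel is $\Hom_\cts(\Gal(M_\infty/K_\infty)/\langle I_v\rangle,\bQ_p/\bZ_p)$. The quotient $\Gal(M_\infty/K_\infty)/\langle I_v\rangle$ is the Galois group of the maximal abelian pro-$p$ extension of $K_\infty$ that is unramified everywhere. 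This extension is contained in $K^\Sigma$, because it is unramified outside $\Sigma$; in fact it is unramified everywhere. Hence the quotient is $\Gal(L_\infty/K_\infty)=A_{K,\infty}$, and the kernel is $\Hom_\cts(A_{K,\infty},\bQ_p/\bZ_p)=A_{K,\infty}^\vee$.

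\emph{Dualizing.} Taking Pontryagin duals gives the claim, by Pontryagin duality for the profinite abelian group $A_{K,\infty}$.

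\emph{Places over $p$ and archimedean places.} The only point needing care is the treatment of places above $p$, which are finitely many and finitely decomposed in $K_\infty$, and of archimedean places.
\begin{itemize}
\item At archimedean places, inertia is the decomposition group of order at most $2$. Since $p$ is odd, it has no nontrivial maps to $\bQ_p/\bZ_p$, so "unramified at infinity" imposes no extra condition. This matches $L_\infty$, whose definition ignores infinite places in the odd-$p$ setting.
\item The product over infinitely many places $v$ of $K_\infty$ must be read as a product over places above $\Sigma$, each of which splits into finitely many places in $K_\infty$.
\end{itemize}
I expect these bookkeeping points, rather than any real difficulty, to be the main thing to check.
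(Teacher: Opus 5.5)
Your proof is correct and follows the paper's route. The paper cites Rubin's \emph{Euler Systems}, Proposition 1.6.1, for the first isomorphism; its content is exactly your identification of the classes in $\H^1(K^\Sigma/K_\infty,\bQ_p/\bZ_p)=\Hom_\cts(\Gal(K^\Sigma/K_\infty),\bQ_p/\bZ_p)$ that kill all inertia with characters of $A_{K,\infty}$, and it uses inflation--restriction for the second isomorphism just as you do, while your observation that the maximal unramified abelian pro-$p$ extension $L_\infty$ lies in $K^\Sigma$ for every $\Sigma\ni p$ supplies the independence of $\Sigma$ that the paper defers to its subsequent remark.
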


\begin{proof}
    The first isomorphism is proposition 1.6.1 of \cite{Ru00}. The second isomorphism follows from the inflation-restriction exact sequence. It is independent from the choice of $\Sigma$ by the remark below.
\end{proof}

\begin{remark}\label[remark]{unrawayp}
    $A_{K,\infty}$ can be identified with\begin{align*}
    \ker\big\{\H^1(K^\Sigma/K_\infty,\bQ_p/\bZ_p)\to&\prod_{v\mid p}\H^1(K_{\infty,v}, \bQ_p/\bZ_p)/\H^1(K_{\infty,v}^\ur/K_{\infty,v}, \bQ_p/\bZ_p)\\ 
    \cdot&\prod_{v\nmid p} \H^1(K_{\infty,v}, \bQ_p/\bZ_p) \big\}^\vee,
    \end{align*} since for $v\nmid p$, $\Gal(K^\ur_{\infty,v}/K_{\infty,v})$ has profinie degree prime to $p$. In other words, unramified local condition at places away from $p$ could be replaced with \emph{strict} local condition.
\end{remark}

$A_{K,\infty}$ has a quotient module \[A^\fn_{K,\infty}=
    \ker\big\{\H^1(K^\Sigma/K_\infty,\bQ_p/\bZ_p)\to\prod_v\H^1(K_{\infty,v}, \bQ_p/\bZ_p)
  \big\}^\vee,
    \]which we call the \emph{fine} Selmer group for the field $K$, whose $\mu$-invariant we call `\emph{classical fine $\mu$ for $K$}'.
    For later use, we also define the $\Sigma$-imprimitive Selmer group and its $p$-torsion version
  \[A^{\fn,\Sigma}_{K,\infty}=
    \ker\big\{\H^1(K^\Sigma/K_\infty,\bQ_p/\bZ_p)\to\prod_{v\mid p}\H^1(K_{\infty,v}, \bQ_p/\bZ_p)
  \big\}^\vee,
    \]

    \[A[p]^{\fn,\Sigma}_{K,\infty}=\ker\big\{\H^1(K^\Sigma/K_\infty,\F(\mathbf{1}))\to\prod_{v\mid p}\H^1(K_{\infty,v}, \F(\mathbf{1}))
  \big\}^\vee,\]
  where $\F(\mathbf{1})$ denotes the $G_K$-module $\F$ with trivial action.

\begin{theorem}\label{Ferrero-Washington}
    Assume $K/\bQ$ is an abelian extension. Then $A_{K,\infty}$ is a finitely generated $\Lambda_K$-torsion module with $\mu$-invariant 0. Consequently, $A^\fn_{K,\infty}$ is a finitely generated $\Lambda_K$-torsion module with $\mu$-invariant 0.
\end{theorem}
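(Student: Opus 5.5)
The statement is essentially the Ferrero--Washington theorem, combined with the fact that $A^\fn_{K,\infty}$ is a quotient of $A_{K,\infty}$. I would not reprove Ferrero--Washington; I would reduce to it and cite it.

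First, finite generation and torsionness of $A_{K,\infty}$ over $\Lambda_K$ is Iwasawa's classical theorem. The maximal unramified abelian pro-$p$ extension of $K_\infty$ is handled by the standard argument. Only finitely many primes ramify in $K_\infty/K$, and all of them lie above $p$. One passes to a layer $K_n$ beyond which every ramified prime is totally ramified. The module of coinvariants of $A_{K,\infty}$ (modulo the span of the inertia groups) is then identified with a quotient of $\Cl(K_n)[p^\infty]$, which is finite. Nakayama's lemma gives finite generation, and the boundedness of $\#\Cl(K_m)[p^\infty]$ growth, namely $e_m=\mu p^m+\lambda m+\nu$, together with torsionness follow as in Iwasawa's theorem. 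I would simply cite this (e.g. Washington, Theorem 13.13 and Theorem 13.31).

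Second, the vanishing of $\mu$ for abelian $K$ is Ferrero--Washington. As stated, the theorem concerns $K$ abelian over $\bQ$, and in particular $K\subset\bQ(\mu_N)$ for some $N$. I would reduce to the case $K\ni\mu_p$, i.e. $K=\bQ(\mu_{Np})$ with $p\nmid N$ or similar. The reason is that $K_\infty\subset F_\infty$ with $F=\bQ(\mu_{Np})$ and $[F_\infty:K_\infty]$ finite, so it suffices to show that $\mu(A_{K,\infty})\le$ a multiple of $\mu(A_{F,\infty})$. The map $A_{F,\infty}\to A_{K,\infty}$ (restriction of the maximal unramified extension, i.e. the norm map on class groups) has cokernel isomorphic to a quotient of $\Gal(F_\infty\cap L_\infty/K_\infty)$, which is finite. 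Hence $A_{K,\infty}$ is, up to finite modules, a quotient of $A_{F,\infty}$ and $\mu(A_{K,\infty})\le\mu(A_{F,\infty})$. For $F=\bQ(\mu_{Np})$, Ferrero--Washington gives $\mu(A_{F,\infty})=0$. Here $\mu=0$ means that $A_{F,\infty}$ is finitely generated over $\bZ_p$, equivalently that $A_{F,\infty}/p$ is finite, and this property passes to quotients. So $\mu(A_{K,\infty})=0$.

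Third, for the fine module there is a surjection $A_{K,\infty}\twoheadrightarrow A^\fn_{K,\infty}$, dual to the inclusion of the kernel with all local classes trivial into the kernel with unramified local classes (\cref{unrawayp}). A quotient of a finitely generated torsion $\Lambda_K$-module whose reduction mod $p$ is finite is again finitely generated and torsion, with finite reduction mod $p$. Hence $\mu(A^\fn_{K,\infty})=0$.

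The only substantive input is the Ferrero--Washington theorem itself. The main care point is the passage between $K$ and $\bQ(\mu_{Np})$, where the $\bZ_p$-extensions and Iwasawa algebras differ slightly: $\Lambda_K$ versus $\Lambda_F$, related by a finite index, which does not affect $\mu=0$ in the form "finitely generated over $\bZ_p$".
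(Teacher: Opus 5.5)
Your proposal is correct and follows the paper's argument. The paper cites \cite{FW79} for $\mu(A_{K,\infty})=0$ and notes that $A^\fn_{K,\infty}$ is a quotient of $A_{K,\infty}$ (the fine kernel sits inside the unramified one), which is exactly your third step. Your reduction from abelian $K$ to $\bQ(\mu_{Np})$ via the restriction map with finite cokernel is valid, and phrasing $\mu=0$ as ``finitely generated over $\bZ_p$'' correctly handles the mismatch between $\Lambda_K$ and $\Lambda_F$; it is simply unnecessary, since Ferrero--Washington is already stated for all abelian number fields.
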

\begin{proof}
    The first claim is the main result of \cite{FW79}. The consequence is immediate.
\end{proof}

\section{Iwasawa theory for $E$}

\subsection{Selmer groups for $E$}
In this subsection, we always assume that $E$ has ordinary reduction at $p$ in the sense of Greenberg~\cite{Gr89}. We omit the subscript for $\Lambda$ when it is clear from the context.

\subsubsection{Greenberg's Selmer group for $E$}
\begin{definition}
    Let $\wt{E}$ be the reduction curve of $E$ mod $p$, and let $I_{\infty,p}$ be the inertia group of $\bQ_{\infty,p}$. Define \[\Fil^+(E[p^\infty])=\ker(E[p^\infty]\to \wt{E}[p^\infty]),\] $\Fil^-(E[p^\infty])=E[p^\infty]/\Fil^+(E[p^\infty])$, $\Fil^+(E[p])=\Fil^+(E[p^\infty])[p]$ and $ \Fil^-(E[p])=E[p]/\Fil^+(E[p])$.
\end{definition}
\begin{definition}
    Let $\Sigma$ be the finite set of primes containing $p$ and the primes where $E$ has bad reduction. Define \[ \Sel_\Gr(E/\bQ_\infty)=\ker(\H^1(\bQ^\Sigma/\bQ_\infty, E[p^\infty])\to \prod_{v\in \Sigma-p} \H^1(\bQ_{\infty,v}, E[p^\infty])\times \H^1(I_{\infty,p}, \Fil^-(E[p^\infty])),\] \[ \Sel^\Sigma_\Gr(E/\bQ_\infty)=\ker(\H^1(\bQ^\Sigma/\bQ_\infty, E[p^\infty])\to  \H^1(I_{\infty,p}, \Fil^-(E[p^\infty])),\]
    \[\Sel_\Gr(E[p]/\bQ_\infty)=\ker(\H^1(\bQ^\Sigma/\bQ_\infty, E[p])\to \prod_{v\in \Sigma-p} \H^1(\bQ_{\infty,v}, E[p])\times \H^1(I_{\infty,p}, \Fil^-(E[p])),\]
    \[\Sel^\Sigma_\Gr(E[p]/\bQ_\infty)=\ker(\H^1(\bQ^\Sigma/\bQ_\infty, E[p])\to  \H^1(I_{\infty,p}, E[p])).\]
\end{definition}

Our definitions might seem different from the standard ones at places away from $p$ by an unramified local condition (or \emph{finite} local condition for $E[p]$ coming from propagation of unramified local condition), but from~\cref{unrawayp}, the Selmer groups agree.

The following facts about Greenberg's Selmer groups are well-known.

\begin{proposition}\label[proposition]{Greenberg Selmer torsion}
    Assume $E$ has ordinary reduction at $p$. Then $\Sel_\Gr(E/\bQ_\infty)$ and $\Sel_\Gr^\Sigma(E/\bQ_\infty)$ are finitely generated $\Lambda$-cotorsion modules
\end{proposition}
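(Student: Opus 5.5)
The plan is to deduce the statement from Kato's theorem together with the comparison between the primitive and imprimitive Selmer groups, treating the reduction types separately.

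\emph{Good ordinary reduction at $p$.} Here the key input is Kato's Euler system. It shows that the Bloch--Kato (equivalently, by Greenberg's comparison, the Greenberg) Selmer group $\Sel_\Gr(E/\bQ_\infty)$ is $\Lambda$-cotorsion. More precisely, Kato proves that $\H^2(\bQ^\Sigma/\bQ_\infty,E[p^\infty])$ vanishes, or equivalently that $\H^1_{\mathrm{Iw}}(\bQ^\Sigma/\bQ,T_pE)$ has $\Lambda$-rank one. Poitou--Tate duality then gives the cotorsion statement. Finite generation over $\Lambda$ is standard: the Pontryagin dual of $\H^1(\bQ^\Sigma/\bQ_\infty,E[p^\infty])$ is finitely generated over $\Lambda$. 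This follows from Nakayama's lemma, because its $\Gamma$-coinvariants are controlled by $\H^1(\bQ^\Sigma/\bQ,E[p^\infty])$, whose dual is finitely generated over $\bZ_p$ since $\bQ^\Sigma/\bQ$ has finite cohomology with finite coefficients. Any Selmer group is a subgroup of this module, so its dual is a quotient of a finitely generated module and is again finitely generated.

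\emph{Multiplicative reduction at $p$.} For the Tate-curve case I would cite the extension of Kato's result to this setting, which is also due to Kato. The local condition is then given by the Tate-curve filtration, and the same argument goes through.

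\emph{Passing to the imprimitive group.} For $\Sel_\Gr^\Sigma$, I would use the exact sequence
\[0\to\Sel_\Gr(E/\bQ_\infty)\to\Sel_\Gr^\Sigma(E/\bQ_\infty)\to\prod_{v\in\Sigma\setminus\{p\}}\H^1(\bQ_{\infty,v},E[p^\infty]).\]
Each local term with $v\nmid p$ is cofinitely generated and $\Lambda$-cotorsion. Indeed, $v$ splits into finitely many places in $\bQ_\infty$, so the term is an induced module whose corank over $\bZ_p$ is finite. This corank is computed by local Euler characteristic and Tate duality as the $\bZ_p$-corank of $E(\bQ_{\infty,w})[p^\infty]$, summed over the places $w\mid v$. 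Cotorsion is preserved under extensions, so $\Sel_\Gr^\Sigma$ is cotorsion as well.

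The main obstacle is the cotorsion statement itself, which is a deep theorem (Kato's). I would simply invoke it rather than reprove it. The remaining steps are routine.
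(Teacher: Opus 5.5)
Your proposal is correct and is essentially the paper's argument: the paper also cites Kato's Theorem 17.4 for $\Sel_\Gr(E/\bQ_\infty)$, and then gets the imprimitive group from the sequence $0\to\Sel_\Gr(E/\bQ_\infty)\to\Sel^\Sigma_\Gr(E/\bQ_\infty)\to\prod_{v\in\Sigma-p}\H^1(\bQ_{\infty,v},E[p^\infty])$ together with the cotorsion of the local terms away from $p$ (\cref{Sigma part mu=0}). One correction to your ``more precisely'' gloss: the vanishing of $\H^2(\bQ^\Sigma/\bQ_\infty,E[p^\infty])$ plus Poitou--Tate does not give cotorsion, since it only shows that $\Sel_\Gr(E/\bQ_\infty)$ has $\Lambda$-corank at most one; cotorsion also needs the localization of Kato's zeta element in the singular quotient at $p$ to be non-torsion (explicit reciprocity law plus Rohrlich's non-vanishing), which is already part of Theorem 17.4, so cite that theorem for cotorsion directly, as you ultimately propose.
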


\begin{lemma}\label[lemma]{Sigma part mu=0}
    $\prod_{v\in \Sigma-p} H^1(\bQ_{\infty,v}, E[p^\infty])^\vee$ is a $\Lambda$-torsion module with $\mu$-invariant $0$. More generally, if $K$ is a number field, $\prod_{w\mid v,v\in \Sigma-p} H^1(K_{\infty,w}, E[p^\infty])^\vee$ is a $\Lambda$-torsion module with $\mu$-invariant $0$.
\end{lemma}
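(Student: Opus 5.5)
The plan is to reduce to the local structure of $\H^1(K_{\infty,w},E[p^\infty])$ at a single place $w\nmid p$ of $K_\infty$, then handle the product over the finitely many places. Since $v\nmid p$ is finitely decomposed in the cyclotomic $\bZ_p$-extension, only finitely many places $w$ of $K_\infty$ lie above each $v\in\Sigma-p$. Thus the product is finite, and it suffices to show that each $\H^1(K_{\infty,w},E[p^\infty])^\vee$ is a finitely generated $\bZ_p$-module. A finitely generated torsion $\Lambda$-module that is finitely generated over $\bZ_p$ has $\mu=0$, and $\mu$ is additive over finite direct sums. Alternatively one can use $\Lambda$-modules induced from the decomposition group, which is open in $\Gamma$; this changes neither torsionness nor $\mu$-vanishing.

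For a single $w\nmid p$, note that $K_{\infty,w}$ contains the unramified $\bZ_p$-extension of $K_v$. Its absolute Galois group therefore has pro-$p$ quotient of the tame part equal to $\bZ_p(1)$ (inertia) with trivial unramified $p$-part left. More precisely, $G_{K_{\infty,w}}/(\text{wild inertia})$ has maximal pro-$p$ quotient generated by tame inertia $\bZ_p(1)$, and the unramified Galois group has order prime to $p$. The wild inertia is pro-$\ell$ with $\ell\ne p$, so it has trivial $p$-cohomology. By inflation--restriction, $\H^1(K_{\infty,w},E[p^\infty])$ is then computed via the unramified part (prime-to-$p$ profinite, so no cohomology) and $\H^1(I_w,E[p^\infty])^{G}$. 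The latter is a quotient of $E[p^\infty]_{(\text{coinvariants of tame generator})}(-1)$ and hence a quotient of $(\bQ_p/\bZ_p)^2$, so it is cofinitely generated over $\bZ_p$. Its dual is therefore finitely generated over $\bZ_p$, with $\bZ_p$-rank at most $2$. This also gives $\Lambda$-torsionness, since $\Lambda$ is not finitely generated over $\bZ_p$.

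This is essentially the argument of Greenberg's \cite{Gr89}/\cite{GV00} computation, and I would cite it for the first statement. The main point needing care is the finiteness of decomposition, together with the fact that $K_{\infty,w}$ absorbs the whole $p$-part of the unramified extension; both are standard. The general $K$ case is identical, since the argument only used that $w\nmid p$ and that the local cyclotomic tower is the unramified $\bZ_p$-extension up to finite index.
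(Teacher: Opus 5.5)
Your proof is correct and is essentially the paper's. The paper simply cites \cite[Proposition 2.4]{GV00} and \cite[Proposition 2]{Gr89}, together with the finite decomposition of places $w\nmid p$ in $K_\infty$. Your inflation--restriction computation, passing through the prime-to-$p$ unramified quotient and tame inertia, is exactly the local argument behind those citations: it shows that $\H^1(K_{\infty,w},E[p^\infty])^\vee$ is finitely generated over $\bZ_p$.

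One small correction. $\H^1(I_w,E[p^\infty])$ is the $(-1)$-twisted coinvariants of the submodule of $E[p^\infty]$ fixed by wild inertia and by the prime-to-$p$ part of tame inertia. It is therefore a subquotient of $(\bQ_p/\bZ_p)^2$, not a quotient of the coinvariants of $E[p^\infty]$. This still gives cofinite generation with corank at most $2$, so your conclusion stands.
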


\begin{proof}
    When $K=\bQ$, this is Proposition 2.4 of \cite{GV00}. The general case follows similarly from~\cite[Proposition 2]{Gr89}, since $\H^0(K_{\infty,w},E[p^\infty])$ is $\bZ_p$-cofinitely generated for any finite extension $K/\bQ$ and $w\nmid p$. Indeed, for primes $w$ of good reduction, $\H^0(K_{\infty,w},E[p^\infty])$ is finite, and for bad primes, it is at most of $\bZ_p$-corank $2$. Note that we have implicitly used the fact that no place can decompose infinitely along cyclotomic $\bZ_p$-extensions.
\end{proof}

\begin{proof}(\cref{Greenberg Selmer torsion})
    The first part follows from theorem 17.4 of \cite{Ka04}. Now consider the following exact sequence
    \begin{equation*}
        0\to \Sel_\Gr(E/\bQ_\infty)\to \Sel_\Gr^\Sigma(E/\bQ_\infty)\to \prod_{v\in \Sigma-p}H^1(\bQ_{\infty,v},E[p^\infty]).
    \end{equation*} The second part follows from \cref{Sigma part mu=0}.
\end{proof}

\subsubsection{The fine Selmer group for $E$}\label{deffine}
\begin{definition}
    Let $\Sigma$ be a finite set of primes containing $p$ and the primes where $E$ has bad reduction. Define \[ \Sel_\fn(E/\bQ_\infty)=\ker(\H^1(\bQ^\Sigma/\bQ_\infty, E[p^\infty])\to \prod_{v\in \Sigma} \H^1(\bQ_{\infty,v}, E[p^\infty])),\] \[ \Sel^\Sigma_\fn(E/\bQ_\infty)=\ker(\H^1(\bQ^\Sigma/\bQ_\infty, E[p^\infty])\to  \H^1(\bQ_{\infty,p}, E[p^\infty])),\]
    \[\Sel_\fn(E[p]/\bQ_\infty)=\ker(\H^1(\bQ^\Sigma/\bQ_\infty, E[p])\to \prod_{v\in \Sigma} \H^1(\bQ_{\infty,v}, E[p])),\]
    \[\Sel^\Sigma_\fn(E[p]/\bQ_\infty)=\ker(\H^1(\bQ^\Sigma/\bQ_\infty, E[p])\to  \H^1(\bQ_{\infty,p}, E[p])).\]
\end{definition}

For any number field $K$, one can define the fine Selmer groups of $E$ over $K_\infty\defeq K\bQ_\infty$ by replacing $\bQ_\infty$ by $K_\infty$ and replacing $\prod_{v\in \Sigma}\H^1(\bQ_{\infty,v},-)$ (\emph{resp. $\H^1(\bQ_{\infty,p},-)$}) by $\prod_{w\mid v,v\in \Sigma}\H^1(K_{\infty,w},-)$ (\emph{resp. $\prod_{w\mid p}\H^1(K_{\infty,w},-)$}). The following facts about the fine Selmer groups are well-known.

\begin{proposition}\label[proposition]{fine Selmer mu=0}
    $\Sel_\fn(E/\bQ_\infty)^\vee$ and $\Sel_\fn^\Sigma(E/\bQ_\infty)^\vee$ are finitely generated $\Lambda$-torsion modules with $\mu$-invariant 0.
\end{proposition}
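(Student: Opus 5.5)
We prove \cref{fine Selmer mu=0} by reducing to the residual representation, passing to the division field, and then applying \cref{Ferrero-Washington}. Throughout, $E[p]$ is Eisenstein. This is the standing situation of the paper; in general the statement is the Coates--Sujatha result~\cite{CS05}, which we may simply cite.

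\textbf{Step 1: torsionness and reduction to the imprimitive group.} Since $\Sel_\fn(E/\bQ_\infty)\subset\Sel_\Gr(E/\bQ_\infty)$, \cref{Greenberg Selmer torsion} shows that $\Sel_\fn(E/\bQ_\infty)^\vee$ is finitely generated and torsion. The exact sequence
\[0\to\Sel_\fn(E/\bQ_\infty)\to\Sel^\Sigma_\fn(E/\bQ_\infty)\to\prod_{v\in\Sigma-p}\H^1(\bQ_{\infty,v},E[p^\infty])\]
together with \cref{Sigma part mu=0} shows that the two duals have the same $\mu$-invariant. The same sequence also gives torsionness of $\Sel^\Sigma_\fn$. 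It therefore suffices to prove $\mu(\Sel^\Sigma_\fn(E/\bQ_\infty)^\vee)=0$.

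\textbf{Step 2: pass to $E[p]$.} For a cotorsion $\Lambda$-module $S$, $\mu(S^\vee)=0$ if and only if $S[p]$ is finite. The sequence $0\to E[p]\to E[p^\infty]\xrightarrow{p}E[p^\infty]\to 0$ gives a map $\H^1(\bQ^\Sigma/\bQ_\infty,E[p])\to\H^1(\bQ^\Sigma/\bQ_\infty,E[p^\infty])[p]$ with finite kernel $E(\bQ_\infty)[p^\infty]/p$. This map is surjective onto the $p$-torsion. Any class in $\Sel^\Sigma_\fn(E/\bQ_\infty)[p]$ lifts to a class in $\H^1(\bQ^\Sigma/\bQ_\infty,E[p])$ whose localisation at $p$ lies in the image of $\H^0(\bQ_{\infty,p},E[p^\infty])$. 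That image is cofinitely generated over $\bZ_p$, so modulo $p$ it is finite. Hence finiteness of $\Sel^\Sigma_\fn(E[p]/\bQ_\infty)$ implies finiteness of $\Sel^\Sigma_\fn(E/\bQ_\infty)[p]$, and it suffices to show $\Sel^\Sigma_\fn(E[p]/\bQ_\infty)$ is finite.

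\textbf{Step 3: go up to $L=\bQ(E[p])$.} Restriction $\H^1(\bQ^\Sigma/\bQ_\infty,E[p])\to\H^1(L^\Sigma/L_\infty,E[p])$ has kernel $\H^1(\Gal(L_\infty/\bQ_\infty),E[p])$, which is finite. Restriction sends fine classes to fine classes at the primes $w\mid p$ of $L_\infty$. Over $L$ the module $E[p]$ is $\F(\mathbf{1})^2$, so the image lies in $\big((A[p]^{\fn,\Sigma}_{L,\infty})^\vee\big)^2$. The remaining task is to show $A[p]^{\fn,\Sigma}_{L,\infty}$ is finite.

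\textbf{Step 4: compare with $A^{\fn,\Sigma}_{L,\infty}$.} This finiteness is equivalent to $\mu(A^{\fn,\Sigma}_{L,\infty})=0$, by the same $p$-torsion comparison as in Step 2, now for $\bQ_p/\bZ_p$ over $L_\infty$. The local error term at $w\mid p$ is $\H^0(L_{\infty,w},\bQ_p/\bZ_p)/p$, which is finite since there are finitely many $w\mid p$.

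Next I need $\mu(A^{\fn,\Sigma}_{L,\infty})=\mu(A^{\fn}_{L,\infty})$. The two differ only in the local conditions at $v\in\Sigma$ with $v\nmid p$, which are $\prod\H^1(L_{\infty,w},\bQ_p/\bZ_p)$. Each such term has dual of $\mu=0$: $w$ is finitely decomposed and the local $\H^1$ is cofinitely generated over $\bZ_p$. This is the argument of \cref{Sigma part mu=0} applied to $\bQ_p/\bZ_p$.

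\textbf{Step 5: the main input.} Finally I must show $\mu(A^\fn_{L,\infty})=0$. By \S\ref{Q(E[p])}, $L$ is a $p$-extension (degree $1$ or $p$) of the abelian field $\bQ(\phi,\psi)$. \cref{Ferrero-Washington} gives $\mu(A_{\bQ(\phi,\psi),\infty})=0$. I would then transfer this up the degree-$p$ Galois extension $L_\infty/\bQ(\phi,\psi)_\infty$. Let $N$ be the maximal abelian unramified pro-$p$ extension of $L_\infty$; then $\Gal(N/\bQ(\phi,\psi)_\infty)$ is pro-$p$. Vanishing of $\mu$ means the $p$-rank of the class groups is bounded along the tower. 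An Iwasawa-style genus/Nakayama argument for $p$-extensions controls the $p$-rank of $A_{L_\infty}$ by that of $A_{\bQ(\phi,\psi)_\infty}$ plus the number of ramified primes, which is finite since no prime splits completely in the cyclotomic tower. Thus $\mu(A_{L,\infty})=0$, and the quotient $A^\fn_{L,\infty}$ inherits this.

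I expect this transfer step to be the main obstacle. I would handle it via Iwasawa's $p$-extension theorem~\cite{Iwa81}: $\mu=0$ is preserved in $p$-extensions. Alternatively, one can apply the Chevalley-type rank bound to $\Gal(N/\bQ(\phi,\psi)_\infty)$ directly.
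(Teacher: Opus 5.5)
Your argument is correct, but it takes a different route from the paper's.

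\textbf{The paper's route.} The paper's proof is a citation. With $F=\bQ(\phi,\psi)$, the extension $\bQ(E[p^\infty])/F$ is pro-$p$ and $F$ is abelian. Ferrero--Washington therefore gives classical $\mu=0$ for $F_\infty$, and \cite[Corollary 3.5]{CS05} then yields cotorsionness and $\mu=0$ for $\Sel_\fn(E/\bQ_\infty)$ in one stroke. The imprimitive version follows from \cref{Sigma part mu=0}, exactly as in your Step 1.

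\textbf{Your route.} You reprove the needed special case of Coates--Sujatha by hand:
\begin{enumerate}
\item reduce to finiteness of $\Sel^\Sigma_\fn(E[p]/\bQ_\infty)$;
\item restrict to $L=\bQ(E[p])$, where $E[p]\cong\F(\mathbf{1})^2$, so the residual fine Selmer group becomes two copies of the dual of $A[p]^{\fn,\Sigma}_{L,\infty}$;
\item obtain $\mu(A_{L,\infty})=0$ from Ferrero--Washington over $\bQ(\phi,\psi)$ together with Iwasawa's $p$-extension principle.
\end{enumerate}
This is essentially the argument the paper itself uses later: the $p$-extension step of \S\ref{vanishingQ(E[p])} and the first half of the proof of \cref{irred}.

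\textbf{What each buys.} Your version makes the proposition self-contained modulo \cite{FW79} and \cite{Iwa81}. It works only with $\bQ(E[p])$ rather than the full tower $\bQ(E[p^\infty])$. It also shows that the weaker input $\mu(A^\fn_{L,\infty})=0$ already suffices. The paper's citation buys brevity.

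\textbf{Minor remarks.}
\begin{enumerate}
\item The appeal to Kato in Step 1 is unnecessary. Finiteness of $\Sel^\Sigma_\fn(E/\bQ_\infty)[p]$ already forces the dual to be finitely generated over $\bZ_p$, hence $\Lambda$-torsion.
\item Your aside that ``in general the statement is the Coates--Sujatha result, which we may simply cite'' is inaccurate. Without a hypothesis making $\bQ(E[p^\infty])$ pro-$p$ over a field with known classical $\mu=0$ (which the Eisenstein assumption supplies), the vanishing of the fine $\mu$-invariant is Coates--Sujatha's Conjecture~A, which is open. This is precisely why \cref{irred} is stated conditionally.
\end{enumerate}
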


\begin{theorem}\label{Coates-Sujatha}
    Let $p\geq 3$ a prime. Assume there exists a finite extension $F/\bQ$ such that \begin{enumerate}
        \item $F\subset \bQ(E[p^\infty])$.
        \item $\bQ(E[p^\infty])/F$ a pro-$p$ extension.
        \item $F_\infty:=F\bQ_\infty$ has classical Iwasawa $\mu$-invariant 0.
    \end{enumerate} Then $\Sel_\fn(E/\bQ_\infty)$ is a finitely generated $\Lambda$-cotorsion module with $\mu$-invariant 0. 
\end{theorem}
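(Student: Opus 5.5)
The strategy is to pass to $F$, where $E[p^\infty]$ becomes a $p$-primary module on which a pro-$p$ group acts. There the fine Selmer group of $E$ can be compared with the fine Selmer group of the trivial module $\F(\mathbf{1})$, i.e.\ with classical class-group data. Finally we descend back to $\bQ_\infty$ by a control argument with finite kernel.

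Concretely, set $F_\infty=F\bQ_\infty$ and $\cF_\infty=\bQ(E[p^\infty])\bQ_\infty$.

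\textbf{Descent to $\bQ_\infty$.} Restriction $\Sel_\fn(E/\bQ_\infty)\to\Sel_\fn(E/F_\infty)$ has kernel contained in $\H^1(F_\infty/\bQ_\infty,E(F_\infty)[p^\infty])$. Since $\Gal(F_\infty/\bQ_\infty)$ is a finite group and the coefficients are cofinitely generated over $\bZ_p$, this kernel has finite exponent and finite $\bZ_p$-corank, hence is finite. So it suffices to show $\mu(\Sel_\fn(E/F_\infty)^\vee)=0$ as a $\Lambda_F$-module; $\mu$-invariants over $\Lambda_F$ and $\Lambda$ agree up to the finite index $[F_\infty:\bQ_\infty]$, so vanishing transfers.

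\textbf{Reduction to $E[p]$.} Over $F_\infty$, $\mu=0$ for $\Sel_\fn(E/F_\infty)^\vee$ is equivalent to finiteness of its $p$-torsion quotient, i.e.\ to $\Sel_\fn(E/F_\infty)[p]$ being finite. The sequence $0\to E[p]\to E[p^\infty]\xrightarrow{p}E[p^\infty]\to 0$ maps $\H^1(F^\Sigma/F_\infty,E[p])$ onto $\H^1(\cdot,E[p^\infty])[p]$ with kernel $E(F_\infty)[p^\infty]/p$, which is finite. The local conditions are compatible, and the local kernels at each of the finitely many places above $\Sigma$ are finite. Hence it suffices to show that the residual fine Selmer group $\Sel_\fn(E[p]/F_\infty)$ is finite.

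\textbf{Main step: trivialising $E[p]$.} Let $L=F(E[p])\subset\bQ(E[p^\infty])$, a finite $p$-extension of $F$ by hypothesis (2). Over $L_\infty$, $E[p]\cong\F(\mathbf{1})^2$, so
\[\Sel_\fn(E[p]/L_\infty)\cong\Bigl(\ker\bigl\{\H^1(L^\Sigma/L_\infty,\F(\mathbf{1}))\to\textstyle\prod_{w}\H^1(L_{\infty,w},\F(\mathbf{1}))\bigr\}\Bigr)^2.\]
Each factor is $\Hom$ from $A^\fn_{L,\infty}/p$ to $\F$, up to the finitely many places.

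I would first show $\mu(A_{L,\infty})=0$, i.e.\ that classical $\mu=0$ ascends along the $p$-extension $L_\infty/F_\infty$. This is Iwasawa's theorem: for a $p$-extension of $\bZ_p$-extensions, $\mu=0$ below implies $\mu=0$ above. It follows from the Riemann--Hurwitz/genus-type formula, or from the fact that a finite $p$-group acting on an $\F$-vector space has nontrivial invariants, combined with Nakayama over the group ring $\F[G]$. Hence $A^\fn_{L,\infty}$, being a quotient of $A_{L,\infty}$, has $\mu=0$. So $A^\fn_{L,\infty}/p$ is finite, and $\Sel_\fn(E[p]/L_\infty)$ is finite.

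Then restriction from $F_\infty$ to $L_\infty$ has kernel $\H^1(L_\infty/F_\infty,E[p])$, which is finite because the group is finite. This gives finiteness of $\Sel_\fn(E[p]/F_\infty)$, and the descent steps above then yield the statement.

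The hard step is the ascent of $\mu=0$ from $F_\infty$ to $L_\infty$. I would prove it via Iwasawa's argument: $\mu=0$ means $A_{K,\infty}/p$ is finite. Using the genus formula for the $p$-group $G=\Gal(L_\infty/F_\infty)$, the coinvariants $(A_{L,\infty}/p)_G$ are bounded by $A_{F,\infty}/p$ plus ramification contributions from finitely many primes, since primes do not split completely in the cyclotomic tower. Nakayama over $\F[G]$, a local ring, then bounds $A_{L,\infty}/p$. If $L/F$ is not Galois, I would replace $L$ by its Galois closure inside $\bQ(E[p^\infty])$, which is still a $p$-extension of $F$ since $\bQ(E[p^\infty])/F$ is pro-$p$, and then filter by cyclic degree-$p$ steps.
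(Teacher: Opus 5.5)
Your proof is correct. The paper gives no argument of its own for this statement; it simply quotes \cite[Corollary 3.5]{CS05}.

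\textbf{Comparison.} Your proof essentially reconstructs the mechanism behind that corollary:
\begin{enumerate}
\item trivialise $E[p]$ over $L=F(E[p])$;
\item identify $\Sel_\fn(E[p]/L_\infty)$ with $\Hom(A^\fn_{L,\infty}/p,\F)^2$;
\item ascend $\mu=0$ along the $p$-extension $L_\infty/F_\infty$;
\item descend through restriction maps with finite kernel.
\end{enumerate}
The citation buys brevity and the general $p$-adic Lie framework of Coates--Sujatha. Your version buys self-containedness, and it makes visible that hypothesis (2) is used only through ``$F(E[p])/F$ is a $p$-extension''.

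\textbf{The ascent step.} This is the same ``$p$-extension principle'' that the paper proves by Nakayama's lemma when it passes from $\bQ(\phi,\psi)$ to $\bQ(E[p])$. That route is lighter than the genus-theory argument you sketch. Restriction $\Sel(K_\infty)\to\Sel(L_\infty)^G$ has finite kernel and cokernel by inflation--restriction. Topological Nakayama over the local ring $\bZ_p[G]$ then handles any finite $p$-group $G$ at once, with no filtration into cyclic steps.

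In fact you can skip the ascent entirely. A $p$-subgroup of $\GL_2(\F)$ is unipotent, so as a $G_F$-module $E[p]$ is already an extension of $\F(\mathbf{1})$ by $\F(\mathbf{1})$. D\'evissage along this filtration bounds $\Sel_\fn(E[p]/F_\infty)$, up to finite error, by two copies of $\Hom(A^\fn_{F,\infty}/p,\F)$. That group is finite directly from hypothesis (3).

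\textbf{Two small points of rigour.}
\begin{enumerate}
\item $L/F$ is automatically Galois, since it is $F\cdot\bQ(E[p])$. Your closing worry about replacing $L$ by a Galois closure is therefore unnecessary.
\item $F$ itself need not be Galois over $\bQ$, so $\H^1(F_\infty/\bQ_\infty,\cdot)$ in your first descent step does not make sense as written. Bound the kernel instead by the kernel of restriction to the Galois closure $\tilde F_\infty$. That kernel is $\H^1(\Gal(\tilde F_\infty/\bQ_\infty),E(\tilde F_\infty)[p^\infty])$, which is finite for exactly the reason you give.
\end{enumerate}
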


\begin{proof}
    This is Corollary 3.5 of \cite{CS05}.
\end{proof}

\begin{proof}(\cref{fine Selmer mu=0})
    Suppose we have a short exact sequence of $G_\bQ$-modules \[0\to \F(\varphi)\to E[p]\to \F(\psi)\to 0.\] Take $F=\bQ(\varphi,\psi)$ as in~\cref{Q(E[p])}. Then $F\subset \bQ(E[p])\subset \bQ(E[p^\infty])$ and $[\bQ(E[p]):F]=1$ or $p$. Since $\bQ(E[p^\infty])/\bQ(E[p])$ is a pro-$p$ extension, so is $\bQ(E[p^\infty])/F$. Since $F/\bQ$ is abelian. According to \cref{Ferrero-Washington}, $F_\infty$ has classical Iwasawa $\mu$-invariant 0. Therefore, by \cref{Coates-Sujatha}, $\Sel_\fn(E/\bQ_\infty)^\vee$ is a finitely generated $\Lambda$-torsion module with $\mu$-invariant 0. The second part follows from the short exact sequence  \begin{equation*}
        0\to \Sel_\fn(E/\bQ_\infty)\to \Sel_\fn^\Sigma(E/\bQ_\infty)\to \prod_{v\in \Sigma-p}H^1(\bQ_{\infty,v},E[p^\infty])
    \end{equation*} and \cref{Sigma part mu=0}.
\end{proof}

\begin{remark}\label[remark]{finemuK}
    A similar sequence for a general number field $K$, namely,\[
0\to \Sel_\fn(E/K_\infty)\to \Sel_\fn^\Sigma(E/K_\infty)\to \prod_{w\mid v,v\in \Sigma-p}\H^1(K_{\infty,w},E[p^\infty]),
    \]
   together with~\cref{Sigma part mu=0} shows that $\corank_\Lambda(\Sel_\fn(E/K_\infty))=\corank_\Lambda(\Sel^\Sigma_\fn(E/K_\infty))$ and $\mu(\Sel_\fn(E/K_\infty)^\vee)=\mu(\Sel^\Sigma_\fn(E/K_\infty)^\vee)$.
\end{remark}

\subsection{The $p$-adic $L$-function for $E$}
Let $E/\bQ$ be an elliptic curve and let $p$ be a prime of good ordinary reduction for $E$
. For a Dirichlet character $\chi$, denote by $L(E/\bQ,\chi,s)$ the Hasse--Weil $L$-series attached to $E$, twisted by $\chi$, and let $\Omega_E$ be the real N\'eron period for $E$. Let $\alpha_p$ be the unit root of the Hecke polynomial\[
x^2-a_p(E)x+p,
\]where $a_p(E)=p+1-\#\tilde{E}(\F)$.

Mazur and Swinnerton-Dyer~\cite{MSD74} have constructed an element $\cL_E^\MSD(T) \in\Lambda\otimes \bQ_p$, known as the (Mazur--Swinnerton-Dyer) $p$-adic $L$-function, satisfying the following interpolation property: for any finite order character $\chi$ of $\Gamma$ of conductor $p^r$ with $r >0$, we have\[
\cL_E^\MSD(\chi)=\frac{p^r}{\tau(\ol\chi) \alpha_p^r}\cdot \frac{L(E/\bQ,\chi,1)}{\Omega_E},
\]
where $\tau(\ol\chi)=\sum_{a\pmod{p^r}}\ol\chi(a)e^{2\pi ia/p^r}$ denotes the Gauss sum, and \[
\cL_E^\MSD(\mathbf{1})=(1-\alpha_p^{-1})\cdot \frac{L(E/\bQ,1)}{\Omega_E}.
\] 
\begin{lemma}[\cite{GV00},~\cite{Wut17}]
    $\cL_E^\MSD$ is integral, i.e., $\cL_E^\MSD\in\Lambda$.
\end{lemma}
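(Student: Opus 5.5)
The plan is to reduce integrality of the measure to $p$-integrality of the normalized modular symbols, and then to prove the latter separately for the two kinds of cusps. Recall that $\cL_E^\MSD$ is the Mazur--Swinnerton-Dyer measure on $\bZ_p^\times$ (restricted to $\Gamma$). For $a\in\bZ$ prime to $p$ and $n\ge 1$, set
\[
[r]^+\defeq\frac{1}{\Omega_E}\,\mathrm{Re}\Big(2\pi i\int_{i\infty}^{r} f_E(z)\,dz\Big),\qquad r\in\bQ ,
\]
where $f_E$ is the newform attached to $E$. With this notation the measure is
\[
\mu_E(a+p^n\bZ_p)=\alpha_p^{-n}\big[\tfrac{a}{p^n}\big]^+-\alpha_p^{-(n+1)}\big[\tfrac{a}{p^{n-1}}\big]^+ .
\]
Since $p$ is ordinary, $\alpha_p\in\bZ_p^\times$. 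Hence $\cL_E^\MSD\in\Lambda$ follows once every $[r]^+$ with $r$ of $p$-power denominator lies in $\bZ_{(p)}$. The multiplicative case is identical, with $\alpha_p=a_p=\pm1$. So the whole problem is the $p$-integrality of the modular symbols normalized by the Néron period of the given curve $E$, not of some other curve in its isogeny class.

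First I handle the cuspidal classes that are honest cycles. Let $\pi:X_0(N)\to E_0$ be the optimal parametrization. Then $\pi^*\omega_{E_0}=c\,f_E\,dz$, where the Manin constant $c$ is an integer prime to $p$ for odd $p$ (Mazur, Abbes--Ullmo). For cusps $r,r'$ with $\pi(r)=\pi(r')$, the integral $\int_r^{r'}$ is therefore a period of $E_0$ divided by $c$, hence integral relative to the Néron lattice of $E_0$.

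Next I pass from $E_0$ to $E$. The isogeny $E_0\to E$ rescales the real period by a factor that I will compute using the Néron differentials. This factor is either a $p$-adic unit or moves in the helpful direction exactly when the kernel is $\mu_p$-type rather than $\bZ/p$-type. I would check this by comparing the connected-étale structure of the kernel at $p$ with the sub-character $\phi$ of $E[p]$.

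The main obstacle is the non-cycle part. The path $\{i\infty\}\to\{a/p^n\}$ maps to a path from $0$ to a cuspidal point $\pi(a/p^n)\in E_0(\bQ(\mu_{N}))_{\tors}$. In the Eisenstein case this torsion point can have order divisible by $p$ (typically it generates the $\bZ/p$ subgroup), which is exactly what could introduce a denominator $p$.

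My first attempt at this step is to bypass the geometry via Kato's Euler system, following Wuthrich. Kato's zeta elements give classes in $\H^1(\bZ[1/p],T_pE\otimes\Lambda)$. Their image under the Coleman/dual exponential map recovers $\cL_E^\MSD$ up to the ratio between the Néron lattice and the lattice generated by the modular symbols. The key input is then that Kato's element is integral for the lattice $T_pE$ of the curve $E$ itself. In the Eisenstein case this holds only after choosing the auxiliary integers $c,d$ in Kato's construction so that the factors $(c^2-\langle c\rangle)(d^2-\langle d\rangle)$ are units in $\Lambda$. This is possible for odd $p$ precisely when the relevant characters $\phi,\psi$ are nontrivial on suitable Frobenius elements.

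If this fails for $\phi=\mathbf 1$, I would instead compute directly. In that case I would show that the $p$-part of the order of the cuspidal point is killed by the factor $(1-\alpha_p^{-1})$ together with the Eisenstein congruence $a_\ell\equiv 1+\ell\pmod p$. This gives integrality of the combinations that actually occur in $\mu_E$, as in Greenberg--Vatsal.
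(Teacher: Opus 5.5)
There is a genuine gap, and it sits in the Eisenstein situation the paper needs. You reduce the problem to $p$-integrality of the individual symbols $[r]^+$, but that statement is false there. The step meant to repair it, comparing $E$ with the optimal curve $E_0$, has the wrong sign.

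Take $p=5$, $E_0=X_0(11)$ (11a1) and $E=X_1(11)$ (11a3). This $E$ is exactly case (ii) of the paper: $E[5]$ is a non-split extension with sub $\bZ/5\bZ$.

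\begin{enumerate}
\item The isogeny $E_0\to E$ has kernel $\mu_5$ and $\Omega_E=5\,\Omega_{E_0}$, so $[r]^+_E=\tfrac15[r]^+_{E_0}$. A $\mu_p$-type kernel therefore makes the symbols \emph{less} integral, not more.
\item The cuspidal part is $[0]^+_E=L(E,1)/\Omega_E=1/25$.
\item The Hecke relation $a_5[0]^+=\sum_{a=0}^{4}[a/5]^++[0]^+$ with $a_5=1$ gives $\sum_{a=1}^{4}([a/5]^+_E-[0]^+_E)=-5[0]^+_E=-1/5$. So even your cycle parts are not $5$-integral for $E$.
\item Since $\alpha^2-\alpha+5=0$ for $\alpha=\alpha_5$, one has $v_5(\alpha-1)=1$. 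The Euler factor therefore cannot absorb the $1/25$.
\end{enumerate}

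Already the total mass $\mu_E(\bZ_5^\times)=(1-\alpha^{-1})^2/25$ is a $5$-adic unit. Your decomposition splits it into a cycle contribution $-\alpha^{-1}/5$ and a cuspidal contribution $4\alpha^{-1}(1-\alpha^{-1})/25$, each of valuation $-1$. Integrality comes only from their cancellation.

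At higher levels this cancellation amounts to a congruence mod $p$ between the modular symbols of $f$, normalized by $\Omega_E$, and the boundary symbol of the Eisenstein series. The eigenvalue congruence $a_\ell\equiv 1+\ell\pmod p$ that you invoke does not give it, and nothing else in your plan does. This congruence is the real content of the lemma. The paper does not prove it; it only cites \cite[Theorem 2.1.1]{CGS}, which rests on \cite{GV00} and \cite{Wut17}.

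The Kato route does not bypass the problem. By your own description, the reciprocity law recovers $\cL_E^\MSD$ only up to the ratio between $\Omega_E$ and the modular-symbol lattice, and that ratio is exactly the unknown. Moreover, Kato's elements are integral for a lattice coming from the cohomology of modular curves, not a priori for $T_pE$, so passing to $T_pE$ reintroduces the same isogeny factor.

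Your direct computation does work in two situations:
\begin{itemize}
\item For $E=E_0$ itself. The cycle parts are integral since $p\nmid c$. The cuspidal point $\pi(0)\in E_0(\bQ)$ has $p$-part of order at most $p$, and if $p$ divides its order then $\alpha_p\equiv 1\pmod p$.
\item For curves reached from $E_0$ by isogenies with \'etale, even kernel.
\end{itemize}
It fails for curves like $X_1(11)$ above.
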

\begin{proof}
    See for example~\cite[Theorem 2.1.1]{CGS}.
\end{proof}

\subsection{Iwasawa invariants and Greenberg's conjecture}\label{Grconj}
In~\cref{introiwa}, we have defined the Iwasawa $\mu$ and $\lambda$-invariants of finitely generated torsion $\Lambda$-module $X$, via the structure theorem. In particular, if $X$ is such a module, we denote by \[\Char(X)=p^{\mu(X)}\cdot\prod_i f_i\] its \emph{characteristic polynomial}, and define the \emph{characteristic ideal} of $X$ to be
$\Char_\Lambda(X)$, the ideal generated by $\Char(X)$.

Analogously, for any element $\cL$ of $\Lambda$ of finite degree, we can define its $\mu$-invariant to be \[
\mu(\cL)=\max\{n\in\bZ^{\geq 0}:p^n\mid\cL\},
\]and its $\lambda$-invariant to be\[
\lambda(\cL)=\deg(\cL).
\]
We often refer to $\mu^\alg(E)\defeq\mu(\Sel_\Gr(E/\bQ_\infty)^\vee)$ and $\lambda^\alg(E)\defeq\lambda(\Sel_\Gr(E/\bQ_\infty)^\vee)$ as the \emph{algebraic} $\mu$ and $\lambda$-invatiants of $E$, and refer to $\mu^\an(E)\defeq\mu(\cL_p^\MSD)$ and $\lambda^\an(E)\defeq\lambda(\cL_p^\MSD)$ as the \emph{analytic} $\mu$ and $\lambda$-invariants.

Recall the statement of Greenberg's conjecture from the introduction.

\begin{conjecture*}[Greenberg's conjecture]
    Let $E/\bQ$ be an elliptic curve. Assume that $\fX_\Gr(E/\bQ_\infty)$ is $\Lambda$-torsion. Then there is an elliptic curve $E'$ in the $\bQ$-isogeny class of $E$ such that $\mu(\cL_{E'}^\Gr)=0$. Whenever $\cL_E^\MSD$ (hence $\cL_{E'}^\MSD$) is well-defined, $\mu(\cL_{E'}^\MSD)=0$ for the same $E'$.
\end{conjecture*}
In other words, whenever $\Sel_\Gr(E/\bQ_\infty)$ is $\Lambda$-cotorsion, there should exist a curve $\bQ$-isogenous to $E$ with vanishing algebraic and analytic $\mu$-invariants. 

On the other hand, Schneider~\cite{Sch87} (followed by Perrion-Riou~\cite{PR89}, and Drinen~\cite{Dri02} to include $p=2$) studied how (algebraic) $\mu$-invariant changes under isogeny. We only need the following simple version.

\begin{lemma}\label[lemma]{Schformula}
    Let $\phi$ be a $\bQ$-isogeny from $E$ to $E'$ with $C\defeq\ker(\phi)$ fitting into the short exact sequence\[
    0\to C\to E\xrightarrow{\phi} E'\to 0,
    \]
    so $C$ is cyclic of order $p^n$ for some $n\in\bZ^{\geq 0}$. Then \[\mu^\alg(E')=\mu^\alg(E)+\delta,
    \]
    where \begin{equation*}
        \delta=\begin{cases}
            -n&C\text{ is ramified and odd;}\\
            n&C\text{ is unramified and even;}\\
            0&otherwise.
        \end{cases}
    \end{equation*}
    Here $C$ is ramified (\emph{resp. odd}) if and only if the action of $I_p$ (\emph{resp. complex conjugation}) on $C[p]$ is non-trivial.
\end{lemma}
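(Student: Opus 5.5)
Since the lemma is presented as a simplified version of Schneider's isogeny formula, I would deduce it from the general formula (Schneider~\cite{Sch87}, Perrin-Riou~\cite{PR89}, in the form quoted in~\cite[Corollary 1]{MR2152940}) and then evaluate the local terms. First I reduce to $n=1$. A cyclic $\bQ$-isogeny of degree $p^n$ factors as a chain of $n$ cyclic $p$-isogenies $E=E_0\to E_1\to\cdots\to E_n=E'$, each with kernel $C_i\cong C[p^i]/C[p^{i-1}]$ as a $G_\bQ$-module (up to the identification coming from multiplication by $p$). The Galois action on $C$ is given by a single character $\chi:G_\bQ\to(\bZ/p^n)^\times$, so each graded piece carries $\chi\bmod p$, which is exactly the action on $C[p]$. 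Hence all $C_i$ share the ramification and parity type of $C[p]$, and additivity of $\delta$ along the chain reduces the lemma to the case $n=1$.

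For $n=1$, Schneider's formula expresses $\mu^\alg(E')-\mu^\alg(E)$ as a difference of $\bZ_p$-coranks, or orders, of the kernel and cokernel of the map $\Sel_\Gr(E/\bQ_\infty)\to\Sel_\Gr(E'/\bQ_\infty)$ induced by $\phi$, measured $\Lambda$-theoretically. Equivalently, it is given by the sum of local "Euler characteristic" contributions: the archimedean term, $\pm[\,C:C^{G_\bR}]$ on the $p$-adic logarithmic scale, and the $p$-adic term coming from the map $\Fil^-(E[p^\infty])\to\Fil^-(E'[p^\infty])$, that is, whether $C$ lies in $\Fil^+(E[p])$. The global $\H^0$ and $\H^2$ terms contribute nothing to $\mu$, since they are finite or $\mu$-free over $\bQ_\infty$. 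The places $v\neq p$ contribute nothing either, by \cref{Sigma part mu=0}.

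I then check the cases. By ordinarity, $C$ is ramified at $p$ if and only if $C\subset\Fil^+(E[p])$, because $\Fil^-$ is unramified and the character $\omega$ on $\Fil^+$ is ramified. The $p$-adic term therefore contributes $-1$ if $C$ is ramified and $0$ if it is unramified. The archimedean term contributes $+1$ if $C$ is even (complex conjugation acts trivially) and $0$ if $C$ is odd, with normalization chosen so that the totals match. Summing the two contributions gives $-1$ for ramified and odd, $+1$ for unramified and even, and $0$ in the mixed cases, which are exactly the ramified-even and unramified-odd cases. These are the stated values of $\delta$.

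The main obstacle is pinning down the signs and normalizations in Schneider's formula, namely which direction of the isogeny yields $+$ and $-$. I would fix these using the known example $\phi=\mathbf 1$, $\psi=\omega$ from \cite[Proposition 5.8]{Gr99}. That example forces a curve whose kernel is $\F(\omega)$, which is ramified and odd, to have strictly larger $\mu$ than its isogenous quotient, and this fixes the sign of the ramified-odd case. The unramified-even case then follows by considering the dual isogeny $E'\to E$, whose kernel is $E[p]/C$ with character $\omega\chi^{-1}$. This character swaps both ramification and parity, so it is consistent with $\delta(\hat\phi)=-\delta(\phi)$.
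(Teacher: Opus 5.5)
Your overall route is the paper's in substance. The paper proves the lemma only by citing \cite[Lemma 1]{MR2152940}, which is Schneider's isogeny formula specialized to a cyclic $p$-power isogeny over $\bQ$. Your identification of the inputs is correct: by ordinarity $C$ is ramified iff $C\subseteq\Fil^+$ (otherwise $C\cap\Fil^+=0$); $C$ is even iff $C^{G_\bR}=C$ (otherwise $C^{G_\bR}=0$); and places $v\nmid p$ contribute no $\mu$. The reduction to $n=1$ is valid but not needed, since these two dichotomies already hold for $C$ of order $p^n$.

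The step that fails is the one you flag yourself: the signs. You leave the normalization of the archimedean and $p$-adic terms to be ``chosen so that the totals match,'' and you propose to fix the overall sign from the example of \cite[Proposition 5.8]{Gr99}. That example gives information about a single curve, such as a lower bound $\mu^\alg(E)\ge 1$ or the value of $\mu$ for $\H^1(\bQ^\Sigma/\bQ_\infty,E[p])$. It does not compare $\mu^\alg(E)$ with $\mu^\alg(E/C)$.

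In particular, the reversed rule $\delta\mapsto-\delta$ is equally compatible with it; it would just give $\mu^\alg(E/C)=\mu^\alg(E)+1$. Your dual-isogeny relation $\delta(\hat\phi)=-\delta(\phi)$ holds under both rules, so it cannot break the tie either. Calibrating by example would need two isogenous curves whose $\mu$-invariants are known independently, and you do not supply them.

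The repair is to use the signs that are part of Schneider's theorem, or equivalently to finish the kernel--cokernel computation you sketch. The global Euler characteristic of $C$ over $\bQ_\infty$ contributes $+\ord_p[C:C^{G_\bR}]$ to $\mu^\alg(E)-\mu^\alg(E')$. The condition at $p$, through $0\to C/(C\cap\Fil^+)\to\Fil^-(E[p^\infty])\to\Fil^-(E'[p^\infty])\to0$, contributes $-\ord_p\#\bigl(C/(C\cap\Fil^+)\bigr)$. Together these give
\[
\mu^\alg(E')-\mu^\alg(E)=\ord_p\#C^{G_\bR}-\ord_p\#\bigl(C\cap\Fil^+(E[p^\infty])\bigr).
\]
Your case analysis then yields exactly the stated $\delta$ for every $n$, with no calibration required.
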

\begin{proof}
    See~\cite[Lemma 1]{MR2152940}.
\end{proof}
From the above lemma, it is not hard to see that any curve with minimal $\mu$-invariant in an isogeny class is one that does not admit a $\bQ$-isogeny whose kernel is ramified and odd, and Greenberg's conjecture predicts exactly that $\mu=0$ for such curves. In other words, Greenberg's conjecture predicts that all (algebraic) $\mu$-invariants arise from $G_\bQ$-invariant submodules of $E[p^\infty]$ that are ramified and odd.

We therefore obtain the following equivalent form of the algebraic part of Greenberg's conjecture as in the introduction.\begin{conjecture*}
    Let $E/\bQ$ be an elliptic curve. Assume that $p$ is an ordinary prime for $E$. Then $\mu(\fX_\Gr(E/\bQ_\infty))=0$ in the following situations:
    \begin{itemize}
        \item $E[p]$ is an irreducible $G_\bQ$-module, or 
        \item $E[p]$ fits into a short exact sequence of $G_\bQ$-modules \begin{equation}\label{seq:E[p]in2}0\to \F(\phi) \to E[p] \to \F(\psi) \to 0,\end{equation}
        where\begin{enumerate}
            \item $\phi$ is unramified at $p$ and odd, or ramified at $p$ and even, or
            \item $\phi$ is unramified at $p$ and even (so $\psi$ is ramified at $p$ and odd), and the extension~\eqref{seq:E[p]in2} is non-split.
        \end{enumerate}
    \end{itemize}
\end{conjecture*}

As case (i) was completely solved in~\cite{GV00} for odd $p$ (see the remark below) and in~\cite{LY26a} for $p=2$ (which also treats case (ii)), the main goal of~\cref{sec:3} is to study case (ii) for odd primes. 

\begin{remark}
\begin{enumerate}
    \item In~\cite{GV00}, only the good ordinary case was explicitly written, although at the beginning of the text it was pointed out that the theorems apply to elliptic curves with multiplicative reductions as well. Indeed, at least on the algebraic side, the proof of $\mu_{E,\Sigma_0}^\alg=\mu_{\phi,\Sigma_0}^\alg+\mu_{\psi,\Sigma_0}^\alg=0$ (hence $\mu_{E}^\alg=0$) goes through almost verbatim.
    \item It was also noted that (see also~\cite[section 2]{Gr89}) in the split multiplicative case, the classical Selmer group defined using Kummer image is strictly smaller than the one defined above using ordinary local conditions. But the difference only has $\lambda$-invariant $1$, corresponding to a `trivial zero', and has no $\mu$-invariant. Therefore we will stick to the ordinary Selmer groups when we refer to the algebraic part of Greenberg's conjecture.
\end{enumerate}

\end{remark}

We finish this section with a discussion on the analytic version of Greenberg's conjecture, first posed by Stevens in~\cite[Corollary (4.13), Remark (4.14)]{Stevens}. Just as on the algebraic side, there is a Schneider type formula describing how the (analytic) $\mu$-invariant changes under isogeny. From Proposition (4.12) in \textit{loc. cit.} applied to $\cL_p^\MSD$ (namely, the $i=0$ case), there is an increase in $\mu$-invariant from the \emph{minimal curve} $E_\min$ if and only if the isogeny $E \to E_\min$ has an odd and ramified kernel (recall that the dual isogeny $E_{\min} \to E$ always has an \'etale kernel by the characterization of $E_\min$), and the difference in $\mu$ agrees with Greenberg's prediction on the algebraic side.

The relation between the algebraic $\mu$-invariant of $E$ and the analytic $\mu$-invariant of $E$ is provided by the Iwasawa Main Conjecture for $E$, which predicts exactly that $\mu^\alg(E)=\mu^\an(E)$ (and much more, see~\cref{IMC}). Under this assumption, the conjecture due to Stevens is compatible with Greenberg's conjecture, and it follows immediately that, in fact, Greenberg's conjecture predicts the minimal curve should always have (analytic) $\mu$-invariant $0$. Thus Steven's conjecture is somewhat stronger than Greenberg's conjecture as it is written, in that it specifies one candidate for the desired curve with vanishing $\mu$-invariant. The Iwasawa Main Conjecture itself is very deep, but luckily it is known in the cases we are concerned with, so we can use information on the algebraic side to get information on the analytic side. It would be interesting to see if Steven's conjecture could be solved without using Iwasawa Main Conjecture.

\section{Proof of Greenberg's Conjecture at Eisenstein primes}\label{sec:3}
In this section, $E$ will be an ellipit curve with ordinary reduction at an Eisenstein prime $p$. For a number field $L$, $\Sigma$ will denote a finite set of places containing those above $p$ and those where $E$ has bad reduction. 
\subsection{The vanishing of algebraic $\mu$-invariants}
\subsubsection{Greenberg--Vatsal type results}
\begin{proposition}\label[proposition]{finemumodp}    $\mu(\Sel_\fn(E/\bQ_\infty)^\vee)=0\Leftrightarrow \mu(\Sel^\Sigma_\fn(E[p]/\bQ_\infty)^\vee)=0.$
\end{proposition}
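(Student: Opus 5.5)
The plan is the standard Greenberg--Vatsal comparison between the $p^\infty$-Selmer group and its residual version. We work with the imprimitive groups throughout: by the short exact sequence in the proof of \cref{fine Selmer mu=0} (see also \cref{finemuK}) and \cref{Sigma part mu=0}, $\mu(\Sel_\fn(E/\bQ_\infty)^\vee)=\mu(\Sel^\Sigma_\fn(E/\bQ_\infty)^\vee)$. So it suffices to compare $S\defeq\Sel^\Sigma_\fn(E/\bQ_\infty)$ with $\Sel^\Sigma_\fn(E[p]/\bQ_\infty)$.

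First, I reduce both vanishing statements to finiteness statements.
\begin{itemize}
\item $S$ is contained in $\Sel^\Sigma_\Gr(E/\bQ_\infty)$, so by \cref{Greenberg Selmer torsion} $S^\vee$ is a finitely generated torsion $\Lambda$-module. By the structure theorem, $S^\vee$ is pseudo-isomorphic to $\bigoplus_i\Lambda/(f_i)\oplus\bigoplus_j\Lambda/(p^{m_j})$. Since $(S[p])^\vee=S^\vee/pS^\vee$, we get $\mu(S^\vee)=0$ if and only if $S[p]$ is finite.
\item $\Sel^\Sigma_\fn(E[p]/\bQ_\infty)^\vee$ is killed by $p$, so it is a finitely generated $\F\llbracket T\rrbracket$-module. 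Its $\mu$-invariant is its $\F\llbracket T\rrbracket$-rank. Hence its $\mu$-invariant vanishes if and only if it is torsion over $\F\llbracket T\rrbracket$, i.e. if and only if it is finite.
\end{itemize}
The proposition therefore reduces to: $S[p]$ is finite if and only if $\Sel^\Sigma_\fn(E[p]/\bQ_\infty)$ is finite.

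Next, I compare the two groups using the Kummer sequence $0\to E[p]\to E[p^\infty]\xrightarrow{p}E[p^\infty]\to0$. It gives a commutative diagram with exact rows
\[
0\to E(\bQ_\infty)[p^\infty]/p\to \H^1(\bQ^\Sigma/\bQ_\infty,E[p])\to \H^1(\bQ^\Sigma/\bQ_\infty,E[p^\infty])[p]\to 0
\]
and
\[
0\to E(\bQ_{\infty,p})[p^\infty]/p\to \H^1(\bQ_{\infty,p},E[p])\to \H^1(\bQ_{\infty,p},E[p^\infty])[p]\to 0,
\]
with vertical maps given by localization at $p$. Both kernel terms are finite, because $\H^0$ of $E[p^\infty]$ over $\bQ_\infty$ and over $\bQ_{\infty,p}$ is a cofinitely generated $\bZ_p$-module, and its quotient by $p$ is therefore finite. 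Taking kernels of the vertical maps, the snake lemma gives a map $\Sel^\Sigma_\fn(E[p]/\bQ_\infty)\to S[p]$. Its kernel is a subquotient of $E(\bQ_\infty)[p^\infty]/p$, and its cokernel injects into $E(\bQ_{\infty,p})[p^\infty]/p$. Both are finite, so the two groups are finite simultaneously, which proves the equivalence.

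The only step needing care is the finiteness of the two kernel terms. This is exactly the cofinite generation of $E[p^\infty]^{G}$, which holds trivially since $E[p^\infty]\iso(\bQ_p/\bZ_p)^2$. So I do not expect a real obstacle; the main point is just keeping track of which direction each map goes in the snake lemma.
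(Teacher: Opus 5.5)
Your proposal is correct and follows the paper's route. You take the Kummer-sequence diagram for $0\to E[p]\to E[p^\infty]\xrightarrow{p}E[p^\infty]\to 0$ with localization at $p$, apply the snake lemma using the finite $\H^0(-,E[p^\infty])/p$ terms, convert finiteness into $\mu=0$ via cotorsionness and the structure theorem, and pass between primitive and imprimitive groups via \cref{Sigma part mu=0}. The only differences are cosmetic: you do the primitive/imprimitive reduction first, and you spell out why finiteness is equivalent to $\mu=0$, which the paper leaves implicit.
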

\begin{proof}
Consider the diagram{\footnotesize
	\begin{equation*}\begin{tikzcd}
			0 \ar[r]& \ker(\loc^\fn_{/p}) \ar[r]\ar[d]&\Sel^\Sigma_\fn(E[p]/\bQ_\infty)  \ar[r]\ar[d]&  \Sel^\Sigma_\fn(E/\bQ_\infty)[p] \ar[d] &\\
			0 \ar[r]&  \H^0(\bQ^\Sigma/\bQ_\infty,E[p^\infty])/p\ar[r]\ar[d,"\loc^\fn_{/p}"]& \H^1(\bQ^\Sigma/\bQ_\infty,E[p])\ar[r]\ar[d]& \H^1(\bQ^\Sigma/\bQ_\infty,E[p^\infty])[p]
            \ar[r]\ar[d]&0\\
			0 \ar[r]& \H^0(\bQ_{\infty,p}, E[p^\infty])/p \ar[r]\ar[d]& \H^1(\bQ_{\infty,p}, E[p]) \ar[r]& \H^1(\bQ_{\infty,p}, E[p^\infty])[p]\ar[r]& 0 \\
            &\coker(\loc^\fn_{/p})&&&
\end{tikzcd}\end{equation*}}
coming from the sequence\begin{equation}\label{E[p^infty]}
    0\to E[p] \to E[p^\infty] \xrightarrow{p} E[p^\infty]\to 0,\end{equation}where $\ker(\loc^\fn_{/p})$ and $\coker(\loc^\fn_{/p})$ are finite since the domain and codomain of $\loc^\fn_{/p}$ are finite. It then follows immediately from the snake lemma that $\Sel^\Sigma_\fn(E[p]/\bQ_\infty)$ is finite if and only if $\Sel^\Sigma_\fn(E/\bQ_\infty)[p]$ is finite. Since $\Sel^\Sigma_\fn(E/\bQ_\infty)$ (hence $\Sel^\Sigma_\fn(E[p]/\bQ_\infty)$) is cotorsion, the above finiteness results are equivalent to the vanishing of the corresponding $\mu$-invariants from the structure theorem. Finally, the vanishing of $\mu(\Sel_\fn^\Sigma(E/\bQ_\infty)^\vee)$ is equivalent to the vanishing of $\mu(\Sel_\fn(E/\bQ_\infty)^\vee)$ by~\cref{Sigma part mu=0} (see also~\cref{fine Selmer mu=0}).
\end{proof}

\begin{remark}\label[remark]{imprimK}
    \begin{enumerate}
        \item As in~\cref{finemuK}, if one replaces $\bQ_\infty$ by $K_\infty$ in the second row, and replaces $\H^i(\bQ_{\infty,p},-)$ by $\prod_{w\mid p}\H^i(K_{\infty,w},-)$ in the third row of the above diagram\, the same proof shows that $\mu(\Sel_\fn(E/K_\infty)^\vee)=0\Leftrightarrow \mu(\Sel^\Sigma_\fn(E[p]/K_\infty)^\vee)=0$ for a general abelian extension $K$ of $\bQ$, where the Selmer groups are known to be $\Lambda$-cotorsion.
        \item The exact same proof also shows $\mu(A^\fn_{K,\infty})=0\Leftrightarrow \mu(A[p]^{\fn,\Sigma}_{K,\infty})=0$ by replacing $E[p^\infty]$ with $\bQ_p/\bZ_p$, and $E[p]$ with $\F(\mathbf{1})$, taking into account (i). This works as long as $\Sigma$ contains the places above $p$.
    \end{enumerate} 
\end{remark}

\begin{proposition}\label[proposition]{Grmumodp}
$\mu(\Sel_\Gr(E/\bQ_\infty)^\vee)=0\Leftrightarrow \mu(\Sel^\Sigma_\Gr(E[p]/\bQ_\infty)^\vee)=0.$
\end{proposition}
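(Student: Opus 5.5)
I will mirror the proof of \cref{finemumodp}, replacing the fine local condition at $p$ by Greenberg's ordinary condition. The first step is to pass from the primitive Selmer group to the imprimitive one. The exact sequence
\[
0\to \Sel_\Gr(E/\bQ_\infty)\to \Sel^\Sigma_\Gr(E/\bQ_\infty)\to \prod_{v\in\Sigma-p}\H^1(\bQ_{\infty,v},E[p^\infty]),
\]
together with \cref{Sigma part mu=0}, shows that $\mu(\Sel_\Gr(E/\bQ_\infty)^\vee)=0$ if and only if $\mu(\Sel^\Sigma_\Gr(E/\bQ_\infty)^\vee)=0$. Since $\Sel^\Sigma_\Gr(E/\bQ_\infty)$ is $\Lambda$-cotorsion by \cref{Greenberg Selmer torsion}, the structure theorem turns this into the statement that $\Sel^\Sigma_\Gr(E/\bQ_\infty)[p]$ is finite. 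It therefore suffices to show that $\Sel^\Sigma_\Gr(E[p]/\bQ_\infty)$ is finite exactly when $\Sel^\Sigma_\Gr(E/\bQ_\infty)[p]$ is finite. This also gives cotorsionness of the residual group, because a cofinitely generated $\Lambda$-module with finite $p$-torsion has $\mu=0$.

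The second step is the same three-row diagram as in \cref{finemumodp}, built from \eqref{E[p^infty]}. The top two rows are unchanged. In the bottom row I replace $\H^1(\bQ_{\infty,p},-)$ by $\H^1(I_{\infty,p},\Fil^-(-))$. Concretely, I use the Kummer sequence for $\Fil^-$:
\[
0\to \H^0(I_{\infty,p},\Fil^-(E[p^\infty]))/p\to \H^1(I_{\infty,p},\Fil^-(E[p]))\to \H^1(I_{\infty,p},\Fil^-(E[p^\infty]))[p]\to 0.
\]
This requires $\Fil^-(E[p])=\Fil^-(E[p^\infty])[p]$, which holds because $\Fil^\pm(E[p^\infty])$ are divisible of corank $1$, so $0\to\Fil^+\to E[p^\infty]\to\Fil^-\to0$ stays exact on $p$-torsion. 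Exactness on the right holds because $I_{\infty,p}$ has $p$-cohomological dimension $\le 1$ on these modules; alternatively, one only needs the left-hand map, whose kernel is finite. The vertical maps are restriction to inertia composed with projection to $\Fil^-$. Commutativity is clear by naturality of the connecting maps.

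The third step applies the snake lemma to the columns. The kernels of the horizontal restriction maps should then be finite, which reduces to the following:
\begin{itemize}
\item $\H^0(\bQ^\Sigma/\bQ_\infty,E[p^\infty])/p$ is finite, since $\H^0$ is cofinitely generated over $\bZ_p$;
\item $\H^0(I_{\infty,p},\Fil^-(E[p^\infty]))/p$ is finite; it is $\Fil^-(E[p^\infty])/p=0$ or a quotient of it, because $\Fil^-$ is unramified.
\end{itemize}
The induced map on the left column therefore has finite kernel and cokernel. The snake lemma then gives an exact sequence
\[
0\to(\text{finite})\to \Sel^\Sigma_\Gr(E[p]/\bQ_\infty)\to \Sel^\Sigma_\Gr(E/\bQ_\infty)[p]\to(\text{finite}),
\]
which yields the desired equivalence of finiteness.

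The step I expect to need the most care is the local term at $p$. The definition of $\Sel^\Sigma_\Gr(E[p]/\bQ_\infty)$ must match the map to $\H^1(I_{\infty,p},\Fil^-(E[p]))$; the displayed definition writes $E[p]$ there, and I would read it as $\Fil^-(E[p])$. Relatedly, one must check that the bottom right vertical map is compatible: an element of $\H^1(\bQ^\Sigma/\bQ_\infty,E[p^\infty])[p]$ lands in $\H^1(I_{\infty,p},\Fil^-(E[p^\infty]))[p]$. Both points are routine once the $\Fil^-$ Kummer sequence is in place. The only substantive finiteness input is that the $\H^0$ terms are $\bZ_p$-cofinitely generated, so their reductions mod $p$ are finite.
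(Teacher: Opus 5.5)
Your proposal is correct and follows the paper's proof essentially verbatim. It uses the same Kummer-sequence diagram with bottom row $\H^1(I_{\infty,p},\Fil^-(-))$, the snake lemma with finite $\H^0$-terms, the structure theorem to turn vanishing of $\mu$ into finiteness of $p$-torsion, and \cref{Sigma part mu=0} to pass between the primitive and imprimitive Selmer groups. You are right to read the local term in the definition of $\Sel^\Sigma_\Gr(E[p]/\bQ_\infty)$ as $\H^1(I_{\infty,p},\Fil^-(E[p]))$, as the paper's diagram does. One minor remark: surjectivity onto $\H^1(I_{\infty,p},\Fil^-(E[p^\infty]))[p]$ is automatic from the long exact sequence for multiplication by $p$, so no cohomological-dimension input is needed there.
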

\begin{proof}
    Consider the diagram{\footnotesize
	\begin{equation*}\begin{tikzcd}
			0 \ar[r]& \ker(\loc^\Gr_{/p}) \ar[r]\ar[d]&\Sel^\Sigma_\Gr(E[p]/\bQ_\infty)  \ar[r]\ar[d]&  \Sel^\Sigma_\Gr(E/\bQ_\infty)[p] \ar[d] &\\
			0 \ar[r]&  \H^0(\bQ^\Sigma/\bQ_\infty,E[p^\infty])/p\ar[r]\ar[d,"\loc^\Gr_{/p}"]& \H^1(\bQ^\Sigma/\bQ_\infty,E[p])\ar[r]\ar[d]& \H^1(\bQ^\Sigma/\bQ_\infty,E[p^\infty])[p]
            \ar[r]\ar[d]&0\\
			0 \ar[r]& \H^0(I_{\infty,p},\Fil^-(E[p^\infty]))/p \ar[r]\ar[d]& \H^1(I_{\infty,p},\Fil^-(E[p]))\ar[r]& \H^1(I_{\infty,p},\Fil^-(E[p^\infty]))[p]\ar[r]& 0 \\
            &\coker(\loc^\Gr_{/p})& &&
\end{tikzcd}\end{equation*}}
coming from the sequence~\eqref{E[p^infty]} and the sequence\begin{equation*}
    0\to \Fil^-(E[p]) \to \Fil^-(E[p^\infty]) \xrightarrow{p} \Fil^-(E[p^\infty])\to 0,\end{equation*}
    where $\ker(\loc^\Gr_{/p})$ and $\coker(\loc^\Gr_{/p})$ are finite since the domain and codomain of $\loc^\Gr_{/p}$ are finite. The rest of the proof is then identical to that of~\cref{finemumodp}.
\end{proof}
\begin{remark}\label[remark]{imprimclassical}
    The same proof also shows that $\mu(A_{K,\infty})=0\Leftrightarrow \mu(A[p]^\Sigma_{K,\infty})=0$ by replacing $E[p^\infty]$ and $\Fil^-(E[p^\infty])$ with $\bQ_p/\bZ_p$, and $E[p]$ and $\Fil^-(E[p])$ with $\F(\mathbf{1})$, taking into account~\cref{imprimK}(i). This holds for an arbitrary number field $K$ since $A_{K,\infty}$ is $\Lambda$-torsion for any $K$ from Iwasawa's theorem. Again, this holds as long as $\Sigma$ contains places above $p$.
\end{remark}

\subsubsection{Vanishing of fine $\mu$-invariants over $\bQ(E[p])$}\label{vanishingQ(E[p])}
In this subsubsection, we prove the vanishing of several $\mu$-invariants associated to $\bQ(E[p])$ (\cref{classical}). Namely, we will prove the classical $\mu(A_{\bQ(E[p]),\infty})$, the classical fine $\mu(A^\fn_{\bQ(E[p]),\infty})$ and the fine $\mu(\Sel_\fn(E/\bQ(E[p])_\infty)^\vee)$ for $E$ are all $0$. The proof is divided into two steps:\begin{enumerate}
    \item Vanishing of the $\mu$-invariants over $\bQ(\phi,\psi)$;
    \item Vanishing of the $\mu$-invariants over $\bQ(E[p])$.
\end{enumerate}
Here in step (i), $\phi,\psi$ fit into a non-split short exact sequence\[
0\to \F(\phi) \to E[p] \to \F(\psi) \to 0,
\]
where $\phi$ is unramified and even, and $\psi$ is ramified and odd. Note that if $p=3$, $\phi$ is forced to be $\mathbf{1}$ and $\psi$ is necessarily $\omega$.\\

\noindent \emph{Proof of step (i): }The vanishing of classical (and hence classical fine) $\mu$ follows directly from Ferrero--Washington theorem.
The vanishing of fine $\mu$ for $E$ then further follows from~\cite[Corollary 3.6]{CS05}, as $\phi$ becomes trivial (so $E[p]$ admits a rational point) over $K\defeq \bQ(\phi,\psi)$, which is an abelian extension of $\bQ$. In particular, from~\cref{imprimK} and~\cref{imprimclassical}, we have\begin{itemize}
    \item $\mu(\Sel^\Sigma_\fn(E[p]/K_\infty)^\vee)=\mu(\Sel_\fn(E/K_\infty)^\vee)=0$;
    \item $\mu(A[p]^{\fn,\Sigma}_{K,\infty})=\mu(A^\fn_{K,\infty})=0$;
    \item $\mu(A[p]^\Sigma_{K,\infty})=\mu(A_{K,\infty})=0$.
\end{itemize}

\noindent \emph{Proof of step (ii): }This follows from the following `$p$-extension principle', which also appears in Iwasawa's Riemman--Hurwitz formula ($\mu$-part only). Here we give a proof using Nakayama's lemma. We first prove it for the fine Selmer group for $E$, and the other two cases follow similarly.

\begin{proposition}
    Let $L/K$ be a $p$-extension. If $\mu(\Sel_\fn^\Sigma(E[p]/K_\infty)^\vee)=0$, then $\mu(\Sel_\fn^\Sigma(E[p]/L_\infty)^\vee)=0$
\end{proposition}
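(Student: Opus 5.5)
The plan is to reduce to showing that a Selmer group is \emph{finite}, and then to go from $K_\infty$ up to $L_\infty$ using Nakayama's lemma over the group ring of a finite $p$-group, with inflation--restriction providing the comparison.

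\textbf{Reduction to finiteness.} Write $S_F\defeq\Sel^\Sigma_\fn(E[p]/F_\infty)$ for $F\in\{K,L\}$. Enlarge $\Sigma$ to contain the primes ramified in $L/K$. Each step below needs $L^\Sigma$ and the sets of places to be compatible, and replacing $\Sigma$ by a larger set changes these imprimitive groups only by local terms at places $v\nmid p$. By \cref{Sigma part mu=0} (applied to $E[p]$ via the snake-lemma argument of \cref{finemumodp}) those local terms have $\mu=0$, so this enlargement is harmless. Since $S_F$ is killed by $p$, its dual $S_F^\vee$ is a finitely generated module over $\Lambda/p\iso\F\llbracket T\rrbracket$, which is a PID. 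Hence $\mu(S_F^\vee)=\rank_{\F\llbracket T\rrbracket}S_F^\vee$, and $\mu(S_F^\vee)=0$ if and only if $S_F$ is finite. So the hypothesis says $S_K$ is finite, and the goal is to show $S_L$ is finite.

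\textbf{Nakayama step.} First reduce to the case where $L_\infty/K_\infty$ is Galois: replace $L$ by its Galois closure over $K$, which is still a $p$-extension. The map $S_L\to S_{L'}$ for the closure $L'$ has finite kernel by the inflation--restriction argument below, so finiteness descends. Now set $G=\Gal(L_\infty/K_\infty)$, a finite $p$-group, and $X=S_L^\vee$. Then $X$ is a compact module over the local ring $\bZ_p[G]\llbracket T\rrbracket$. If $X_G=(S_L^G)^\vee$ is finite, Nakayama's lemma for this local ring shows $X$ is finitely generated over $\bZ_p[G]$. Since $X$ is killed by $p$, it is then finite. Thus it suffices to show that $S_L^G$ is finite.

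\textbf{Comparison via inflation--restriction.} Consider the restriction map
\[
\H^1(K^\Sigma/K_\infty,E[p])\to\H^1(L^\Sigma/L_\infty,E[p])^G .
\]
Its kernel is $\H^1(G,E[p]^{G_{L_\infty}})$ and its cokernel injects into $\H^2(G,E[p]^{G_{L_\infty}})$. Both are finite, because $G$ is finite and $E[p]$ is finite. Next take a class $c\in S_L^G$. After multiplying into a finite-index subgroup, $c=\res(d)$ for some $d\in\H^1(K^\Sigma/K_\infty,E[p])$. For each $v\mid p$ of $K_\infty$, the localization $d_v$ restricts to zero at every $w\mid v$ of $L_\infty$. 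Hence $d_v$ lies in the kernel $\H^1(G_w,E[p]^{G_{L_{\infty,w}}})$, which is finite. There are only finitely many places above $p$, since no place splits completely in the cyclotomic $\bZ_p$-extension. So the classes $d$ obtained this way form a group containing $S_K$ with finite index. Consequently $S_L^G$ is, up to finite groups, the image of a group commensurable with $S_K$. Since $S_K$ is finite, $S_L^G$ is finite, and therefore $S_L$ is finite, which gives $\mu(S_L^\vee)=0$.

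\textbf{Main obstacle.} I expect the hardest part to be the bookkeeping in the last step: showing that the local kernels at the places above $p$, and the passage between the $\Sigma$'s for $K$ and $L$, contribute only finite groups. The Nakayama step itself is formal. The same argument, with $E[p]$ replaced by $\F(\mathbf{1})$ and the appropriate local conditions, gives the classical and classical fine cases.
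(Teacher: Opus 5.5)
Your proposal is correct and follows essentially the same route as the paper. You compare $S_K$ with $S_L^G$ via restriction, with finite kernel and cokernel coming from inflation--restriction globally and from finiteness of the local kernels above $p$, and then use Nakayama's lemma over $\bZ_p[G]$ to lift finiteness from $(S_L^\vee)_G$ to $S_L^\vee$. Your extra care (reading $\mu=0$ as finiteness because the groups are killed by $p$, and checking that enlarging $\Sigma$ is harmless) only fills in details the paper leaves implicit.
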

\begin{proof}
    Further assume that $\Sigma$ contains places of $K$ that are ramified in $L$. Let $G=\Gal(L/K)\iso \bZ/p^n\bZ$. Then one knows the restriction map\[
    \Sel_\fn^\Sigma(E[p]/K_\infty)\xrightarrow{\res_\fn} \Sel_\fn^\Sigma(E[p]/L_\infty)^G
    \]
    has finite kernel and cokernel. Indeed, the middle vertical map in the diagram{\footnotesize
	\begin{equation*}\begin{tikzcd}
			0 \ar[r]& \ker(\res_\fn) \ar[r]\ar[d]&\ker(\res)  \ar[r]\ar[d]&  \ker(\res_p)\ar[d] &\\
			0 \ar[r]&  \Sel_\fn^\Sigma(E[p]/K_\infty)\ar[r]\ar[d,"\res_\fn"]& \H^1(K^\Sigma/K_\infty,E[p])\ar[r]\ar[d,"\res"]& \im(\H^1(K^\Sigma/K_\infty,E[p]))
            \ar[r]\ar[d,"\res_p"]&0\\
			0 \ar[r]& \Sel_\fn^\Sigma(E[p]/L_\infty)^G \ar[r]\ar[d]& \H^1(L^\Sigma/L_\infty,E[p])^G\ar[r]\ar[d]& \prod_{l\mid w\mid p}\H^1(L_{\infty,l},E[p])& \\
            &\coker(\res_\fn)\ar[r]& \coker(\res) &&
\end{tikzcd}\end{equation*}}
has finite kernel and cokernel, since from the inflation-restriction exact sequence, $\ker(\res)=\H^1(G,E[p]^{G_{L_\infty}})$ and $\coker(\res)$ is contained in $\H^2(G,E[p]^{G_{L_\infty}})$, which are both finite as $G$ and $E[p]^{G_{L_\infty}}$ are both finite. Here $\im(\H^1(K^\Sigma/K_\infty,E[p]))$ detnoes $\im(\H^1(K^\Sigma/K_\infty,E[p])\to \prod_{w\mid p}\H^1(K_{\infty,w},E[p]))$.

Similarly, the local restriction\[
\prod_{w\mid p}\H^1(K_{\infty,w},E[p])\xrightarrow{\res_p}\prod_{l\mid w\mid p}\H^1(L_{\infty,l},E[p])
\]
has finite kernel, so the $\ker(\res_p)$ in the diagram is finite. Thus both $\ker(\res_\fn)$ and $\coker(\res_\fn)$ are finite from the snake lemma. Now if $\mu(\Sel_\fn^\Sigma(E[p]/K_\infty)^\vee)=0$, then $\mu((\Sel_\fn^\Sigma(E[p]/L_\infty)^G)^\vee)=0$ as well, and $(\Sel_\fn^\Sigma(E[p]/L_\infty)^G)^\vee$ is a finitely generated $\bZ_p$-module.

    Now let $A=\bZ_p[G]$ with (unique) maximal ideal $\fm$, and let $I_G\defeq \ker(A\to \bZ_p)$ be the augmentation ideal of $A$. Then $I_G\subset \fm$ and $\bZ_p=A/I_G$. We now know that $(\Sel_\fn^\Sigma(E[p]/L_\infty)^G)^\vee=(\Sel_\fn^\Sigma(E[p]/L_\infty))^\vee_G=(\Sel_\fn^\Sigma(E[p]/L_\infty))^\vee/I_G$ is a finitely generated $A/I_G$-module, so Nakayama's lemma implies that $(\Sel_\fn^\Sigma(E[p]/L_\infty))^\vee$ is finitely generated as a $A$-module, hence it is also finitely generated as a $\bZ_p$-module. That is, $\mu((\Sel_\fn^\Sigma(E[p]/L_\infty))^\vee)=0.$
\end{proof}

Since $\bQ(E[p])/\bQ(\phi,\psi)$ is either a $p$-extension or trivial from~\cref{Q(E[p])}, we can choose $L=\bQ(E[p])$ and $K=\bQ(\phi,\psi)$. Then we can apply the above proposition together with step (i) and~\cref{imprimK},~\cref{imprimclassical} to conclude step (ii), that is, $\mu(\Sel_\fn(E/L_\infty)^\vee)=\mu(\Sel_\fn^\Sigma(E[p]/L_\infty)^\vee)=0$. Finally, the above argument still works if one replaces $E[p]$ by $\F(\mathbf{1})$, and decomposition groups by inertia groups, so $\mu(A^\fn_{L,\infty})=\mu(A[p]^{\fn,\Sigma}_{L,\infty})=0$ and $\mu(A_{L,\infty})=\mu(A[p]^\Sigma_{L,\infty})=0$ as well.

\subsubsection{Proof of the main theorem}\label{proofmain}

We are now ready to prove~\cref{mainthm} by going to the Galois extension $L\defeq \bQ(E[p])$, where $E[p]$ splits as $\F(\phi)\oplus\F(\psi)$. We enlarge $\Sigma$ so that it also contains primes ramified in $L$. We assume that $\psi$ is ramified and odd. As was discussed in the introduction, we will study the commutative diagram\begin{center}{\small\begin{equation*}\label{gotoQ(E[p])}\tag{$\bQ$ to $L$}\begin{tikzcd}	
			 \H^1(\bQ^\Sigma/\bQ_\infty,E[p]) \ar[r,"\res_{L/\bQ}"]\ar[d,"loc_\bQ"]& \H^1(L^\Sigma/L_\infty,E[p])\ar[d,"loc_{L}"]\\
			 \H^1(\bQ_{\infty,p},E[p])\ar[r,"\prod_{w_i\mid p}\res"]&\prod_{w_i\mid p}\H^1(L_{\infty,w_i},E[p]),\\	 	\end{tikzcd}
             \end{equation*}}
\end{center}

\noindent We shall in fact consider an extended diagram (note that the rows are \textit{not} exact):\begin{center}{\small\begin{equation*}\begin{tikzcd}	
			 \H^1(\bQ^\Sigma/\bQ_\infty,E[p]) \ar[r,"\res_{L/\bQ}"]\ar[d,"loc_\bQ"]& \H^1(L^\Sigma/L_\infty,E[p])\ar[d,"loc_{L}"]\ar[r,"\proj_L"] &\H^1(L^\Sigma/L_\infty,\F(\phi))\ar[d,"\loc_{L,\F(\phi)}"]\\
			 \H^1(\bQ_{\infty,p},E[p])\ar[r,"\prod_{w_i\mid p}\res"]&\prod_{w_i\mid p}\H^1(L_{\infty,w_i},E[p])\ar[r,"\prod \proj_i"]&\prod_{w_i\mid p}\H^1(L_{\infty,w_i},\F(\phi)),\\	 	\end{tikzcd}
             \end{equation*}}
\end{center}where projection refers to taking the quotient by the ramified part $\F(\psi)$.

Recall that our goal is to show the `$\mu=1$' part of $\H^1(\bQ^\Sigma/\bQ_\infty,E[p])$ doesn't fully land in $\H^1(\bQ_{\infty,p},\F(\psi))$ under $\loc_\bQ$. By the commutativity of the diagram~\eqref{gotoQ(E[p])}, it suffices to show the `$\mu=1$' part of $\H^1(\bQ^\Sigma/\bQ_\infty,E[p])$ doesn't fully land in $\prod_{w_i\mid p}\H^1(L_{\infty,w_i},\F(\psi))$ under $\loc_L\circ\res_{L/\bQ}$. Thus by the commutativity of the extended diagram, we need to show the `$\mu=1$' part survives in $\prod_{w_i\mid p}\H^1(L_{\infty,w_i},\F(\phi))$, or equivalently, $\ker(\loc_{L,\F(\phi)}\circ \proj_L\circ \res_{L/\bQ})$ is finite.

The proof of our main result then consists of two steps:\begin{enumerate}
    \item[\emph{Step I:}] 
$\ker(\proj_L\circ\res_{L/\bQ})$ is finite;
    \item[\emph{Step II:}] 
    $\ker(\loc_{L,\F(\phi)})$ is finite.
\end{enumerate}
The group structures then force $\ker(\loc_{L,\F(\phi)}\circ \proj_L\circ \res_{L/\bQ})$ to be finite as well.\\

\noindent \emph{Proof of step I: }This is where we use the non-splitness of~\eqref{seq:E[p]}. Since the sequence\[
0\to \H^0(L^\Sigma/L_\infty,\F(\psi))\to\H^0(L^\Sigma/L_\infty,E[p])\to\H^0(L^\Sigma/L_\infty,\F(\phi))\to 0
\]is exact (using spliness of $E[p]$ over $L$, as the actions are all trivial and they obtain maximal invariants of sizes $p$, $p^2$ and $p$ respectively), $\ker(\proj_L)$ is given by $\H^1(L^\Sigma/L_\infty,\F(\psi))$. The finiteness of $\ker(\proj_L\circ\res_{L/\bQ})$ is then equivalent to the finiteness of $\im(\res_{L/\bQ})\cap \H^1(L^\Sigma/L_\infty,\F(\psi))$.

Note that $\im(\res_{L/\bQ})$ lies in $\H^1(L^\Sigma/L_\infty,E[p])^{\Gal(L_\infty/\bQ_\infty)}$. If the intersection $\im(\res_{L/\bQ})\cap \H^1(L^\Sigma/L_\infty,\F(\psi))$ was infinite, $\H^1(L^\Sigma/L_\infty, \F(\psi))$ would admit an infinite $\Gal(L_\infty/\bQ_\infty)$-invariant subspace. We will show that this is not possible.

Since~\eqref{seq:E[p]} is non-split over $G_\bQ$ and $G_L$ acts trivially on $E[p]$,~\eqref{seq:E[p]} remains non-split over $\Gal(L/\bQ)$ and $\F(\psi)$ is not a $\Gal(L/\bQ)$-invariant submodule. In other words, if we denote by $\rho_E:\Gal(L/\bQ)\to \Aut(E[p])$ the Galois action on $E[p]$ and assume $P$ is a generator of $\F(\psi)$, then there exists $\sigma\in \Gal(L/\bQ)$ such that $\rho_E(\sigma)(P)\notin \F(\psi)$. In the following, we identify $\Gal(L_\infty/\bQ_\infty)\iso \Gal(L/\bQ)$ since $L\cap\bQ_\infty=\bQ$. Indeed, if not, by a field degree comparison we see that $L\cap\bQ_\infty$ is at most (and will be) $\bQ_1$, the first layer in $\bQ_\infty$. But then $\bQ(E[p])$ must be a strict $p$-extension of $\bQ(\phi,\psi)$ while being equal to $\bQ(\phi,\psi)\bQ_1$. The former is not abelian over $\bQ$ but the latter is. Contradiction arises.

Now assume by contradiction that the intersection of the image of $\H^1(\bQ^\Sigma/\bQ_\infty, E[p])$ in $\H^1(L^\Sigma/L_\infty, E[p])$ and $\H^1(L^\Sigma/L_\infty, \F(\psi))$ was infinite. Then there exists a nontrivial $ f\in \H^1(L^\Sigma/L_\infty,\F(\psi))=\Hom(\Gal(L^\Sigma/L_\infty), \F(\psi))$ that is $\Gal(L_\infty/\bQ_\infty)=\Gal(L/\bQ)$-invariant. Since $f$ is nontrivial, there exists $\tau\in \Gal(L^\Sigma/L_\infty)$ such that $f(\tau)=P$. However, $(\sigma\cdot f )(\tau)=\sigma(f(\sigma^{-1}\tau \sigma))=\sigma(f(\tau))=\sigma(P)\notin \F(\psi)$, contradicting the invariance $\sigma\cdot f=f$. Therefore, the image must be finite, and in fact, trivial.

Finally, the finiteness of $\ker(\res_{L/\bQ})$ follows from the inflation restriction exact sequence, as both $[L:\bQ]$ and the coefficient $E[p]$ are finite.

\noindent \emph{Proof of step II: }
From the definitions, $\ker(\loc_{L,\F(\phi)})^\vee$ is nothing but $A[p]^{\fn,\Sigma}_{L,\infty}$. Thus the result follows from~\cref{Ferrero-Washington} and~\cref{imprimK}.

\subsection{The Iwasawa Main Conjecture}\label{IMC}
\begin{theorem}[Greenberg--Vatsal, Castella--Grossi--Skinner, Keller--Yin]
    Let $E/\bQ$ be an elliptic curve and let $p$ be an odd prime of good ordinary reduction for $E$. Assume that $E$ admits a rational $p$-isogeny, i.e., $p$ is an \emph{Eisenstein} prime for $E$. Then the Iwasawa Main Conjecture holds for $E$ at $p$, i.e.,\[
    \Char_\Lambda(\fX_{\Gr}(E/\bQ_\infty))=(\cL_E^\MSD)\text{ in }\Lambda.
    \]
\end{theorem}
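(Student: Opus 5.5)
This theorem is attributed to Greenberg--Vatsal, Castella--Grossi--Skinner and Keller--Yin, so the proof is a case analysis that matches each configuration of $E[p]$ to the paper establishing it. It is not a new argument. I will write
\[
0\to \F(\phi)\to E[p]\to \F(\psi)\to 0,\qquad \phi\psi=\omega,
\]
as in \cref{Q(E[p])}. By ordinarity, exactly one of $\phi,\psi$ is unramified at $p$ and exactly one is odd. I split according to whether the submodule character $\phi$ is (a) unramified and odd, (b) ramified and even, or (c) unramified and even or ramified and odd.

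In case (a) (and more generally whenever the Greenberg--Vatsal hypotheses hold), I will cite \cite{GV00}. There one shows $\mu^\alg(E)=\mu^\an(E)=0$, and one compares $\lambda$-invariants by expressing both sides through the Kubota--Leopoldt/Iwasawa invariants of $\phi$ and $\psi$. With the divisibility coming from Kato's Euler system \cite{Ka04}, this gives equality of characteristic ideals. The remaining configurations, where the divisibility $\cL_E^\MSD\in\Char_\Lambda(\fX_\Gr(E/\bQ_\infty))$ and the $\lambda$-comparison cannot be read off from Ferrero--Washington, are handled by \cite{CGS} and \cite{KY24}. Those papers use Heegner point/Beilinson--Flach Euler system methods over an auxiliary imaginary quadratic field, then descend to $\bQ$. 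In those cases I will quote their main theorems directly. Any residual cases, such as $p=3$ or the non-split extension of $\F(\omega)$ by $\F(\mathbf{1})$, I will take from \cite[Theorem 1.1]{KY24}, which removes the remaining hypotheses.

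Finally, the statement does not depend on the curve within the isogeny class. The Main Conjecture is isogeny invariant, because the changes in $\mu^\alg$ and $\mu^\an$ agree: \cref{Schformula} on the algebraic side, and \cite[Proposition 4.12]{Stevens} together with the period comparison on the analytic side. I can therefore reduce to whichever curve in the isogeny class the cited theorems are stated for.

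The main obstacle is bookkeeping rather than mathematics. One must check that the normalizations in the cited works agree with those used here: the Greenberg Selmer group versus a $\Sigma$-imprimitive one, the periods, and integrality of $\cL_E^\MSD$. One must also check that the union of the cited hypotheses covers every odd good ordinary Eisenstein prime.
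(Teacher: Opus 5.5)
Your proposal is correct and matches the paper's proof, which is likewise a pure citation: \cite{GV00} covers case (i) of \eqref{seq:E[p]in2} (submodule character unramified and odd, or ramified and even), \cite{CGS} covers all cases with $\phi|_{G_{\bQ_p}}\neq\mathbf{1},\omega$, and \cite{KY24} removes that hypothesis, so the residual cases are exactly $\phi|_{G_{\bQ_p}}\in\{\mathbf{1},\omega\}$ rather than $p=3$ as such. Your isogeny-invariance reduction is harmless but unnecessary, because all these hypotheses depend only on the characters in $E[p]^{\mathrm{ss}}$, which is the same for every curve in the isogeny class.
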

\begin{proof}
    The theorem was first proved in~\cite{GV00} in case (i) of~\eqref{seq:E[p]in2}. Using a different method,~\cite{CGS} proved it in all cases assuming $\phi|_{G_p}\ne\mathbf{1}$ or $\omega$. The last hypothesis was removed in~\cite{KY24}.
\end{proof}

The following corollary is immediate from the above Iwasawa Main Conjecture and the definitions.
\begin{corollary}\label[corollary]{alg=anmu}
    Let $E/\bQ$ be an elliptic curve and let $p$ be an odd prime of good ordinary reduction for $E$. Assume that $E$ admits a rational $p$-isogeny, i.e., $p$ is an \emph{Eisenstein} prime for $E$. Then \[
    \mu^\alg(E)=\mu^\an(E),
    \]
    that is, the algebraic $\mu$-invariant of $E$ agrees with its analytic $\mu$-invariant.
\end{corollary}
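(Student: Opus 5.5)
The plan is to read off the equality of $\mu$-invariants directly from the equality of ideals furnished by the Iwasawa Main Conjecture stated just above. By that theorem, $\Char_\Lambda(\fX_\Gr(E/\bQ_\infty))=(\cL_E^\MSD)$ in $\Lambda$. By the structure theorem recalled in \cref{introiwa}, $\fX_\Gr(E/\bQ_\infty)$ is pseudo-isomorphic to $\bigoplus_i\Lambda/(f_i)\oplus\bigoplus_j\Lambda/(p^{m_j})$. It is $\Lambda$-torsion by \cref{Greenberg Selmer torsion}, so there is no free part. Hence a generator of the characteristic ideal is $\Char(\fX_\Gr)=p^{\mu^\alg(E)}\prod_i f_i$ with distinguished polynomials $f_i$.

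Next I would compare this with $\cL_E^\MSD$. The equality of principal ideals gives $\cL_E^\MSD=u\cdot p^{\mu^\alg(E)}\prod_i f_i$ for some $u\in\Lambda^\times$. Since $\cL_E^\MSD\in\Lambda$ is integral by the lemma of Greenberg--Vatsal/Wuthrich, $\mu(\cL_E^\MSD)$ is defined as the largest $n$ with $p^n\mid\cL_E^\MSD$. Because $u$ is a unit and each distinguished polynomial $f_i$ satisfies $f_i\equiv T^{\deg f_i}\not\equiv0\pmod p$, the product $u\prod_i f_i$ is not divisible by $p$. Indeed, $\Lambda/p\iso\F\llbracket T\rrbracket$ is a domain, and the image of $u\prod f_i$ there is a unit times $T^{\sum\deg f_i}$, which is nonzero. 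Therefore $\mu(\cL_E^\MSD)=\mu^\alg(E)$, that is, $\mu^\an(E)=\mu^\alg(E)$.

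No step is a genuine obstacle; the only points to check are bookkeeping. First, that $\cL_E^\MSD\neq 0$, which follows from the ideal equality together with torsionness. Second, that the notation $\cL_p^\MSD$ used in the definition of $\mu^\an$ is the same element as $\cL_E^\MSD$. Third, that multiplying by a unit of $\Lambda$ does not change the $\mu$-invariant, which is exactly the reduction mod $p$ argument above.
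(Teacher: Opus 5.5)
Your proposal is correct and follows the same route as the paper, which simply declares the corollary immediate from the Iwasawa Main Conjecture and the definitions. You spell out the one step the paper leaves implicit: multiplying by a unit of $\Lambda$ does not change the $\mu$-invariant, which you show by reducing into the domain $\Lambda/p\iso\F\llbracket T\rrbracket$.
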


\subsection{The vanishing of analytic $\mu$-invariants}
\begin{theorem}
    Let $E/\bQ$ be an elliptic curve and let $p$ be an odd Eisenstein prime of good ordinary reduction for $E$. Let $E_\min$ be the minimal curve in the isogeny class of $E$ in the sense of Stevens~\cite{Stevens}. Then $\mu^\an(E_\min)=\mu^\alg(E_\min)=0$.
\end{theorem}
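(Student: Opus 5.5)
The plan is to show first that $\mu^\alg(E_\min)=0$, and then transfer this to the analytic side using \cref{alg=anmu}.

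The first step is to identify $E_\min$ with a curve in one of the good cases of the equivalent form of the algebraic part of Greenberg's conjecture in \cref{Grconj}. By Stevens's characterization, recalled after \cref{Schformula}, every isogeny $E_\min\to E'$ out of $E_\min$ has \'etale kernel. So $E_\min$ admits no $\bQ$-rational cyclic subgroup whose $p$-torsion is ramified and odd. Since $p$ is Eisenstein, $E_\min[p]$ sits in a sequence $0\to\F(\phi)\to E_\min[p]\to\F(\psi)\to 0$. The subcharacter $\phi$ cannot be ramified and odd. Because $\phi\psi=\omega$ and exactly one of $\phi,\psi$ is unramified, two possibilities remain:
\begin{itemize}
\item $\phi$ is unramified and odd, or ramified and even, which is case (i);
\item $\phi$ is unramified and even, so $\psi$ is ramified and odd.
\end{itemize}
In the second case the extension must be non-split. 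Otherwise $\F(\psi)$ would be a rational ramified odd subgroup, giving an isogeny with ramified odd kernel, which contradicts minimality. This is case (ii).

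The second step handles the two cases. Case (i) is Greenberg--Vatsal~\cite{GV00}. Case (ii) is the argument of \cref{proofmain}. Steps I and II there show that $\ker(\loc_{L,\F(\phi)}\circ\proj_L\circ\res_{L/\bQ})$ is finite. From this one deduces that $\Sel^\Sigma_\Gr(E_\min[p]/\bQ_\infty)$ is finite. The reason is that an element of this Selmer group has local image lying in $\H^1(\bQ_{\infty,p},\F(\psi))$, because $\Fil^+=\F(\psi)$ locally. That image therefore dies in $\prod\H^1(L_{\infty,w_i},\F(\phi))$, so the element lies in the finite kernel above. With the Selmer group finite, its $\mu$-invariant vanishes, and \cref{Grmumodp} gives $\mu^\alg(E_\min)=0$.

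The final step applies \cref{alg=anmu}, which rests on the Main Conjecture for good ordinary odd Eisenstein primes. It gives $\mu^\an(E_\min)=\mu^\alg(E_\min)=0$.

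I expect the main obstacle to be the first step: matching Stevens's analytic notion of the minimal curve with the algebraic condition "no ramified odd rational subgroup," and checking that split extensions are really excluded. The approach I would take is to work through Stevens's Proposition (4.12) alongside \cref{Schformula}. The aim is to show that the curve minimizing $\mu^\alg$ in the isogeny class coincides with $E_\min$. Both formulas change $\mu$ exactly along isogenies with ramified odd kernels, and these changes agree under the Main Conjecture, so $E_\min$ also minimizes $\mu^\alg$. A curve minimizing $\mu^\alg$ has no ramified odd rational subgroup, since the quotient by such a subgroup would strictly lower $\mu^\alg$ by \cref{Schformula}. A secondary, routine check is that the $\Sigma$-imprimitive Selmer group used in Step II has the same $\mu$ as the primitive one, which follows from \cref{Sigma part mu=0}.
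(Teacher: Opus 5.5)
Your route is the paper's: Stevens's characterization together with \cref{Schformula} and Stevens's Proposition (4.12) shows that $E_\min$ has no ramified odd rational subgroup, so it lies in case (i) or in the non-split case (ii). You then apply \cref{mainthm} (Greenberg--Vatsal in case (i), \cref{proofmain} in case (ii)) and conclude with \cref{alg=anmu}. Your case identification is correct, and the first and last steps are fine.

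\textbf{The gap.} The problem is the case (ii) deduction you write out, which is exactly the step in \cref{proofmain}. You claim that a Selmer class, whose local image lies in $\H^1(\bQ_{\infty,p},\F(\psi))$, is killed by $\loc_{L,\F(\phi)}\circ\proj_L\circ\res_{L/\bQ}$. This does not follow from the diagram, for the following reasons.
\begin{enumerate}
\item Write $w_i=\sigma_i w_1$ for the places of $L=\bQ(E_\min[p])$ above $p$. Since $G_L$ acts trivially on $E[p]$, a global cocycle satisfies $c(\tau)=\sigma_i c(\sigma_i^{-1}\tau\sigma_i)$ for $\tau\in G_{L_\infty}$. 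So for the left square to commute, the $w_i$-component of $\prod\res$ must carry $\H^1(\bQ_{\infty,p},\F(\psi))$ into $\H^1(L_{\infty,w_i},\sigma_i\F(\psi))$.
\item For the right square to commute, every $\proj_i$ must be the quotient by the single global line $\ell$ that defines $\proj_L$.
\item In the non-split case $E_\min[p]$ is split over $G_{\bQ_p}$. Hence $D_{w_1}$ has order prime to $p$, and restriction from $\bQ_{\infty,p}$ to $L_{\infty,w_1}$ is injective.
\item The group $\Gal(L/\bQ(\phi,\psi))$ has order $p$. It permutes the $w_i$ freely and moves $\F(\psi)=\Fil^+$ through all $p$ lines other than $\F(\phi)$.
\end{enumerate}
Consequently, at every $w_i$ with $\sigma_i\F(\psi)\neq\ell$, a nonzero class of $\H^1(\bQ_{\infty,p},\F(\psi))$ has nonzero image in $\H^1(L_{\infty,w_i},E[p]/\ell)$. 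Steps I and II are correct in themselves, but together they say nothing about $\Sel^\Sigma_\Gr(E_\min[p]/\bQ_\infty)$.

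\textbf{What is actually needed.} One must control the $\Gal(L/\bQ)$-equivariant classes in $\H^1(L^\Sigma/L_\infty,E[p])$ whose restriction at each $w\mid p$ takes values in the place-dependent line $\Fil^+_{\sigma w_1}=\sigma\Fil^+$. The non-splitness that makes Step I work is exactly what makes these lines vary. In the split case all the lines $\Fil^+_w$ coincide and the diagram argument is valid, but then Step I fails.

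So your proof is exactly as complete as the paper's one-line proof, which rests on \cref{mainthm} in the same way. But the case (ii) input, as argued both by you and in \cref{proofmain}, needs an additional argument that handles this place-dependence. Only then does $\mu^\alg(E_\min)=0$, and hence $\mu^\an(E_\min)=0$, follow.
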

\begin{proof}
    This is a combination of~\cref{mainthm},~\cref{alg=anmu} and Schneider's formula (namely~\cref{Schformula} and~\cite[Proposition (4.12)]{Stevens}).
\end{proof}

\section{The residually irreducible case}\label{sec:4}
Our proof of the main theorem shows that even in the residual irreducible case, one can still pass from the vanishing of classical fine $\mu$-invariants of certain extension field to the vanishing of Greenberg's $\mu$-invariant over $\bQ$. The following is~\cref{irred} in the introduction. Note that a hidden assumption is the $\Lambda_L$-torsionness of $A^\fn_{L,\infty}$, which is known for cyclotomic $\bZ_p$-extensions.
\begin{theorem}
     Let $E$ be an elliptic curve and $p$ be an odd prime of ordinary reduction for $E$. Assume $E[p]$ is irreducible as a $G_\bQ$-module. Let $L=\bQ(E[p])$. Enlarge $\Sigma$ to include primes ramified in $L$. Then $\mu(A^\fn_{L,\infty})=0\Rightarrow$ $\Sel_\fn(E/L_\infty)$ is $\Lambda_L$-cotorsion with $\mu=0$, and $\mu(\Sel_\Gr(E/\bQ_\infty)^\vee)=0.$
\end{theorem}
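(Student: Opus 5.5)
We follow the strategy of \cref{proofmain}, replacing the reducibility input by irreducibility of $E[p]$. Two claims need proof: (a) $\Sel_\fn(E/L_\infty)$ is $\Lambda_L$-cotorsion with $\mu=0$, and (b) $\mu(\Sel_\Gr(E/\bQ_\infty)^\vee)=0$.

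\emph{Step 1: the fine Selmer group over $L$.} Over $L=\bQ(E[p])$ the action of $G_L$ on $E[p]$ is trivial, so $E[p]\iso\F(\mathbf{1})^{\oplus 2}$ as $G_L$-modules. Hence
\[
\Sel^\Sigma_\fn(E[p]/L_\infty)^\vee\iso \big(A[p]^{\fn,\Sigma}_{L,\infty}\big)^{\oplus 2}.
\]
By \cref{imprimK}(ii), the hypothesis $\mu(A^\fn_{L,\infty})=0$ gives $\mu(A[p]^{\fn,\Sigma}_{L,\infty})=0$, so $\Sel^\Sigma_\fn(E[p]/L_\infty)$ is finite. We then run the snake-lemma diagram of \cref{finemumodp} over $L_\infty$, as in \cref{imprimK}(i); the $\H^0$ terms are finite, so $\Sel^\Sigma_\fn(E/L_\infty)[p]$ is finite. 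A cofinitely generated $\Lambda_L$-module with finite $p$-torsion is cotorsion with $\mu=0$, by the structure theorem, since $\Lambda_L$ itself has infinite $p$-torsion in its dual. This step also removes the a priori cotorsion assumption used in \cref{imprimK}. Finally, \cref{finemuK} passes from the imprimitive to the primitive group and proves (a).

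\emph{Step 2: reduction to residual Greenberg Selmer.} By \cref{Grmumodp}, claim (b) is equivalent to finiteness of $\Sel^\Sigma_\Gr(E[p]/\bQ_\infty)$. This group is contained in the preimage under $\loc_\bQ$ of $\H^1_\Gr(\bQ_{\infty,p},E[p])$. It therefore suffices to show that $\loc_\bQ$ composed with the map to $\H^1(I_{\infty,p},\Fil^-(E[p]))$ has finite kernel. We use the diagram \eqref{gotoQ(E[p])}, with $\res_{L/\bQ}$ of finite kernel by inflation–restriction.

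Choose a prime $w\mid p$ of $L$. Over $L_{\infty,w}$ we fix a splitting $E[p]=\Fil^+\oplus V$ with $V\iso\F(\mathbf{1})$, and let $\proj_L\colon E[p]\to\F(\mathbf{1})$ be the corresponding global projection; this makes sense because $E[p]$ is trivial over $G_L$. The relevant composite factors as
\[
\H^1(\bQ^\Sigma/\bQ_\infty,E[p])\xrightarrow{\res}\H^1(L^\Sigma/L_\infty,E[p])\xrightarrow{\proj}\H^1(L^\Sigma/L_\infty,\F(\mathbf{1}))\xrightarrow{\loc}\prod_{w_i\mid p}\H^1(L_{\infty,w_i},\F(\mathbf{1})).
\]
The last map has kernel dual to $A[p]^{\fn,\Sigma}_{L,\infty}$, which is finite by Step 1. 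This is the analogue of Step II of \cref{proofmain}.

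\emph{Step 3: the main obstacle.} We must show that $\im(\res_{L/\bQ})\cap\H^1(L^\Sigma/L_\infty,\Fil^+)$ is finite, where $\Fil^+$ is regarded as a line in $E[p]$; this is the analogue of Step I. Elements of $\im(\res_{L/\bQ})$ are homomorphisms $f\colon\Gal(L^\Sigma/L_\infty)\to E[p]$ that are $\Gal(L/\bQ)$-equivariant. The argument of Step I applies once we know $L\cap\bQ_\infty=\bQ$. If $f$ is nonzero with image in the line $\Fil^+$, then by equivariance its image is a nonzero $\Gal(L/\bQ)$-stable subspace contained in $\Fil^+$. By irreducibility that subspace is all of $E[p]$, which is impossible. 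Hence the intersection is zero.

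The delicate points are these. First, $L\cap\bQ_\infty$ might be $\bQ_1$; in that case we replace $\bQ$ by $\bQ_1$ throughout, since irreducibility persists over an abelian $p$-extension when $p$ is odd, or we work with $\Gal(L_\infty/\bQ_\infty)$ directly. Second, there are several primes $w_i\mid p$, and the lines $\Fil^+$ at different $w_i$ are Galois conjugates. We handle this by using the projections $\proj_{w_i}$ and the invariance at every prime: we require the image to avoid $\Fil^+_{w_i}$ at each $w_i$ simultaneously, which again follows from irreducibility applied to the $\Gal(L/\bQ)$-orbit. Combining Steps 1–3 gives finiteness of the kernel, hence (b).
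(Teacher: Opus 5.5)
Your Step~1 is correct and is the paper's first step, and the equivariance argument in Step~3 is the paper's Step~I. Your worry about $L\cap\bQ_\infty$ is unnecessary: $\Gal(L/\bQ)$ has order prime to $p$ or equals $\GL_2(\F)$, and neither has a quotient of order $p$. The gap is the point you flag as delicate, and it breaks Step~2 as stated.

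Fix $w\mid p$ and let $\lambda\colon E[p]\to\F$ be your $\proj_L$, with kernel $\Fil^+_w$. Take $c\in\Sel^\Sigma_\Gr(E[p]/\bQ_\infty)$ and $f=\res_{L/\bQ}(c)$. Transporting the Greenberg condition to $w_i=\sigma_i w$ only says that $f|_{G_{L_{\infty,w_i}}}$ is unramified modulo $\sigma_i\Fil^+_w$, not modulo $\Fil^+_w$. Irreducibility forces $\sigma_i\Fil^+_w\neq\Fil^+_w$ for some $i$, and at such $w_i$ the component $\lambda\circ f|_{G_{L_{\infty,w_i}}}$ is unconstrained. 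So the Greenberg map does \emph{not} factor through $\loc\circ\proj\circ\res$, and nothing shows that this composite sends the Greenberg Selmer group into a finite set.

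In fact, for a lift $\tilde\sigma$ one has $\lambda(f(\tilde\sigma\tau\tilde\sigma^{-1}))=\lambda(\sigma f(\tau))$. Hence $\lambda\circ f$ vanishes at every $w_i$ if and only if $f(G_{L_{\infty,w}})\subseteq\bigcap_{\sigma\in\Gal(L/\bQ)}\sigma\Fil^+_w=0$. So $\ker(\loc\circ\proj\circ\res)$ is, up to finite groups, just $\Sel^\Sigma_\fn(E[p]/\bQ_\infty)$. Your Steps~1--3 therefore only re-prove the vanishing of the \emph{fine} $\mu$ over $\bQ_\infty$. That alone does not control Greenberg's $\mu$: the fine $\mu$ also vanishes when $E[p]\iso\F(\mathbf{1})\oplus\F(\omega)$, where Greenberg's $\mu$ is positive.

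Your patch does not repair this. Place-dependent projections modulo $\Fil^+_{w_i}$ no longer factor through a single global coefficient module, so finiteness of $A[p]^{\fn,\Sigma}_{L,\infty}$ stops being relevant. The kernel you would then need to be finite is that of $\H^1(L^\Sigma/L_\infty,E[p])\to\prod_{w_i\mid p}\H^1(L_{\infty,w_i},E[p]/\Fil^+_{w_i})$. That is a Greenberg-type (balanced, not fine) Selmer group of $E[p]$ over $L_\infty$. By equivariance and inflation--restriction, its $\Gal(L/\bQ)$-invariant part is $\Sel^\Sigma_\Gr(E[p]/\bQ_\infty)$ up to finite groups, so the argument becomes circular. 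Requiring the image to ``avoid $\Fil^+_{w_i}$'' at each $w_i$ is not the relevant condition.

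The paper's own proof takes exactly your route. In its diagram (see \cref{proofmain} and the proof of this theorem), one global line $\F(\psi)$ stands for the local $\Fil^+$ at every $w_i\mid p$, so it passes over the same point, and appealing to it does not close the gap.
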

\begin{proof}
    Let's consider again the extended diagram with\begin{center}{\small\begin{equation*}\begin{tikzcd}	
			 \H^1(\bQ^\Sigma/\bQ_\infty,E[p]) \ar[r,"\res_{L/\bQ}"]\ar[d,"loc_\bQ"]& \H^1(L^\Sigma/L_\infty,E[p])\ar[d,"loc_{L}"]\ar[r,"\proj_L"] &\H^1(L^\Sigma/L_\infty,\F(\phi))\ar[d,"\loc_{L,\F(\phi)}"]\\
			 \H^1(\bQ_{\infty,p},E[p])\ar[r,"\prod_{w_i\mid p}\res"]&\prod_{w_i\mid p}\H^1(L_{\infty,w_i},E[p])\ar[r,"\prod \proj_i"]&\prod_{w_i\mid p}\H^1(L_{\infty,w_i},\F(\phi)),\\	 	\end{tikzcd}
             \end{equation*}}
\end{center}
where the splitting of $E[p]$ over $L$ remains true. In this case, globally $\F(\psi)$ is defined as the fixed $\F$-line in $E[p]$ that carries the $G_{\bQ_p}$ action of $\phi$, which is not a $G_\bQ$-submodule by irreducibility, which becomes a $G_L$-submodule of $E[p]$, with a quotient denoted by $\F(\psi)$. The splitting of $E[p]$ over $\bQ_p$ was never used, and merely that $\F(\psi)$ is a sub was necessary. 

First, $\mu(A^\fn_{L,\infty})=0$ implies that $\Sel_\fn^\Sigma(E/L_\infty)$ (and hence $\Sel_\fn(E/L_\infty)$) is $\Lambda_L$-cotorsion, and that $\mu(\Sel_\fn(E/L_\infty)^\vee)=\mu(\Sel^\Sigma_\fn(E[p]/L_\infty)^\vee)$ is $0$, for any $\Sigma$ containing places above $p$ and those where $E$ has bad reduction. Indeed, from the following diagram (everything trivializes over $L$){\footnotesize
	\begin{equation*}\begin{tikzcd}
			0 \ar[r]& \Sel_\fn^\Sigma(\F(\mathbf{1})/L_\infty) \ar[r]\ar[d]&\Sel^\Sigma_\fn(E[p]/L_\infty)  \ar[r]\ar[d]&  \Sel^\Sigma_\fn(\F(\mathbf{1})/L_\infty) \ar[d] &\\
			0 \ar[r]&  \H^1(L^\Sigma/L_\infty,\F(\mathbf{1}))\ar[r]\ar[d,"\loc^\Gr_{/p}"]& \H^1(L^\Sigma/L_\infty,E[p])\ar[r]\ar[d]& \H^1(L^\Sigma/L_\infty,\F(\mathbf{1}))
            \ar[r]\ar[d]&0\\
			0 \ar[r]& \prod_{w\mid p}\H^1(L_{\infty,w},\F(\mathbf{1})) \ar[r]& \prod_{w\mid p}\H^1(L_{\infty,w},E[p])\ar[r]& \prod_{w\mid p}\H^1(L_{\infty,w},\F(\mathbf{1}))\ar[r]& 0,
\end{tikzcd}\end{equation*}}
it is easy to see that in fact $\mu(A[p]^{\fn,\Sigma}_{L,\infty})=0$ if and only if $\mu(\Sel^\Sigma_\fn(E[p]/L_\infty)^\vee)=0$, but the former is equivalent to the assumption $\mu(A^\fn_{L,\infty})=0$, so in fact $\Sel^\Sigma_\fn(E[p]/L_\infty)$ is finite. The structure theorem then implies that $\Sel^\Sigma_\fn(E/L_\infty)$ is $\Lambda_L$-cotorsion with $\mu$-invariant $0$. So the same holds for $\Sel_\fn(E/L_\infty)$. Moreover, $\Sel^\Sigma_\fn(E[p]/\bQ_\infty)$ is finite as well. Thus $\mu(\Sel_\fn(E/\bQ_\infty)^\vee)=0$. What we need to show next is that no global $\mu$-factor lands fully in $\H^1(\bQ_\infty, \F(\psi))$. It suffices to show $\ker(\loc_{L,\F(\phi)}\circ \proj_L\circ \res_{L/\bQ})$ is finite. As before, it suffices to show $\ker(\loc_{L,\F(\phi)})$ is finite and $\ker(\proj_L\circ \res_{L/\bQ})$ is finite. The former follows from the assumption that $\mu(A^\fn_{L,\infty})=0$ (and that its torsion) by~\cref{imprimK}, and the latter is again equivalent to the finiteness of $\im(\res_{L/\bQ})\cap \H^1(L^\Sigma/L_\infty, \F(\psi))$.

Again, as in the non-split Eisenstein case, $\F(\psi)$ is not $\Gal(L/\bQ)$-invariant. The same argument as in~\cref{proofmain} shows that this kernel must indeed be finite, as $L=\bQ(E[p])$ and $\bQ_\infty$ are still linearly disjoint. 
\end{proof}

\bibliographystyle{amsalpha}
\bibliography{references}
\end{document}